\documentclass[11pt,reqno]{amsart}

\usepackage{indentfirst, amssymb, amsmath, amsthm, mathrsfs, setspace, enumerate}

\usepackage[
bookmarksnumbered,
colorlinks,
plainpages,
pdfencoding=auto,
psdextra
]{hyperref}

\usepackage{xcolor}
\usepackage{graphicx}
\usepackage{float}
\usepackage{caption}
\usepackage{subcaption}
\usepackage{cite}

\usepackage{tikz}
\usepackage{pgfplots}
\pgfplotsset{compat=1.18}

\usetikzlibrary{
	shapes,
	arrows,
	positioning,
	fit,
	backgrounds,
	calc,
	shapes.geometric,
	shadows,
	arrows.meta
}

\definecolor{color1}{RGB}{70,40,200}
\definecolor{color2}{RGB}{213,14,0}
\definecolor{color3}{RGB}{0,158,10}
\definecolor{color4}{RGB}{230,126,34}

\newtheorem*{theoA}{Theorem A}
\newtheorem*{theoB}{Theorem B}

\newtheorem{theo}{Theorem}[section]
\newtheorem{lem}{Lemma}[section]
\newtheorem{cor}{Corollary}[section]

\newtheorem{defi}{Definition}[section]
\newtheorem{rem}{Remark}[section]

\newcommand{\ol}{\overline}
\newcommand{\be}{\begin{equation}}
	\newcommand{\ee}{\end{equation}}
\newcommand{\beas}{\begin{eqnarray*}}
	\newcommand{\eeas}{\end{eqnarray*}}
\newcommand{\bea}{\begin{eqnarray}}
	\newcommand{\eea}{\end{eqnarray}}

\numberwithin{equation}{section}

\begin{document}
\title[E\MakeLowercase {stimates for a Parameterized}......]{\LARGE E\Large\MakeLowercase {stimates for a Parameterized} H\MakeLowercase {ankel Functional of Starlike Mappings of Order $\alpha$ and Quasi-Convex Mappings of Type $B$ in Complex} B\MakeLowercase{anach Spaces} }

\date{}
\author[S. M\MakeLowercase{ajumder}, A. B\MakeLowercase{anerjee}, S. P\MakeLowercase{anja} \MakeLowercase{and} D. P\MakeLowercase{ramanik} ]{S\MakeLowercase{ujoy} M\MakeLowercase{ajumder}$^{1}$, A\MakeLowercase{bhijit} B\MakeLowercase{anerjee}$^2$, S\MakeLowercase{hantanu} P\MakeLowercase{anja}$^3$$^*$ \MakeLowercase{and} D\MakeLowercase{ebabrata} P\MakeLowercase{ramanik}$^4$}

\address{$^{1}$ Sujoy Majumder, Department of Mathematics, Raiganj University, Raiganj, West Bengal-733134, India.}
\email{sm05math@gmail.com, sjm@raiganjuniversity.ac.in}
\address{$^{2}$ Abhijit Banerjee, Department of Mathematics, University of Kalyani, West Bengal 741235, India.}
\email{abanerjee\_kal@yahoo.co.in, abhijitbanerjee@klyuniv.ac.in}
\address{$^3$ Shantanu Panja, Department of Mathematics, University of Kalyani, West Bengal 741235, India.}
\email{panjasantu07@gmail.com, shantanumath26@klyuniv.ac.in}
\address{$^{4}$ Debabrata Pramanik, Department of Mathematics, Raiganj University, Raiganj, West Bengal-733134, India.}
\email{debumath07@gmail.com}

\renewcommand{\thefootnote}{}
\footnote{2020 \emph{Mathematics Subject Classification}: Primary 32H02· Secondary 30C45.}
\footnote{\emph{Key words and phrases}: Starlike mappings of order $\alpha$, Quasi-convex mappings of type $B$, Fr\'echet derivatives, Coefficient problems, Hankel determinant.}
\footnote{*\emph{Corresponding Author}: Shantanu Panja.}

\renewcommand{\thefootnote}{\arabic{footnote}}
\setcounter{footnote}{0}

\begin{abstract}
In this paper, we introduce and investigate a parameterized Hankel-type functional
	$$H_{2,2}^{\mu}(f)=\mu a_2a_4-a_3^2,\qquad \mu\in[0,1],
	$$	which reduces to the classical second-order Hankel determinant $H_{2,2}(f)$ when $\mu=1$. We obtain estimates for $H_{2,2}^{\mu}(f)$ over the class of starlike mappings of order $\alpha$ on the unit ball of a complex Banach space and on the unit polydisk $\mathbb{U}^n$ in $\mathbb{C}^n$. We also establish estimates for $H_{2,2}^{\mu}(f)$ over the class of quasi-convex mappings of type $B$ on the unit ball of a complex Banach space and on the unit polydisk $\mathbb{U}^n$. For $\mu=1$, our results recover the corresponding estimates for the classical Hankel determinant and extend the known one-dimensional results of Cho et al. \cite{Cho-2018} and Thomas et al. \cite{Thomas-Tuneski} to higher-dimensional settings.

\end{abstract}
\thanks{Typeset by \AmS -\LaTeX}
\maketitle
\section{\bf Introduction}

Univalent function theory has been a core part of complex analysis since  its inception, with the Riemann mapping theorem as its foundational result. However, Poincaré demonstrated that this elegant result fails in higher dimensions.  As a result, the classical methods from the one-dimensional setting require substantial modification. This challenge has inspired extensive research over the years to identify appropriate subclasses and structural assumptions that enable meaningful extensions to higher-dimensional spaces.

Let $\mathcal{S}(\mathbb{U})$ denote the class of functions $f$ that are univalent in the open unit disk $\mathbb{U} = \{z \in \mathbb{C} : \vert{}z\vert{} < 1\}$, normalized by $f(0) = 0$ and $f'(0) = 1$. Any function $f \in \mathcal{S}(\mathbb{U})$ has a power series expansion of the form:

\begin{align}\label{eq1}
	f(z) = z + \sum_{n=2}^{\infty} a_n z^n, \quad z \in \mathbb{U}.
\end{align}

The Hankel determinant $H_{2,2}(f)$ of a function $f \in \mathcal{S}(\mathbb{U})$ of the form
\eqref{eq1} is defined as
\begin{align*}
H_{2,2}(f) = \begin{vmatrix} a_2 & a_3 \\ a_3 & a_4 \end{vmatrix} =a_2a_4 - a_3^2.
\end{align*}
The Hankel determinant serves as a powerful invariant that encodes subtle geometric information about the function's behavior. Unlike individual coefficients, which may vary substantially, the combination $a_2a_4 - a_3^2$ captures the nonlinear interplay between consecutive coefficients and has proven remarkably effective in distinguishing between different geometric subclasses (see, e.g., \cite{Bansal-2015, Cho-2017, Janteng-2006, Lee-2013}).
Given $\alpha \in [0, 1)$, by $\mathcal{S}^*_{\alpha}(\mathbb{U})$ we denote a subclass of $\mathcal{S}(\mathbb{U})$ of functions $f$ such that
\begin{align*}
\Re\left\lbrace\frac{z f'(z)}{f(z)} \right\rbrace > \alpha,\quad z \in \mathbb{U}
\end{align*}
and is called the class of starlike mappings of order $\alpha$.

The study of starlike mappings represents one of the most extensively developed areas in geometric function theory, with applications ranging from fluid dynamics to conformal mapping problems. The recent sharp estimate of the Hankel determinant $H_{2,2}(f)$ over the class $\mathcal{S}^*_{\alpha}(\mathbb{U})$ by Cho et al. \cite{Cho-2018} represents a significant milestone:
\begin{theoA}\emph{\cite[Theorem 2.3]{Cho-2018}}
Let $\alpha \in [0, 1)$. If $f \in \mathcal{S}^*_{\alpha}(\mathbb{U})$ is of the form \eqref{eq1}, then
\begin{align}\label{TA1}
|H_{2,2}(f)| \le (1 - \alpha)^2.
\end{align}
The inequality \eqref{TA1} is sharp, and the equality holds for the function
\begin{align*}
f(z) := \frac{z}{(1 - z^2)^{1-\alpha}}, \quad z \in \mathbb{U}.
\end{align*}
\end{theoA}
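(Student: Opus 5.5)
To prove Theorem A, I would pass to the Carath\'eodory class. For $f\in\mathcal{S}^*_{\alpha}(\mathbb{U})$, define
\begin{align*}
p(z)=\frac{1}{1-\alpha}\left(\frac{zf'(z)}{f(z)}-\alpha\right)=1+\sum_{n\ge1}c_nz^n .
\end{align*}
Then $\Re p>0$ on $\mathbb{U}$. Comparing coefficients in $zf'(z)=f(z)\bigl(\alpha+(1-\alpha)p(z)\bigr)$ gives, with $\beta=1-\alpha$,
\begin{align*}
a_2=\beta c_1,\qquad 2a_3=\beta c_2+\beta^2c_1^2,\qquad 3a_4=\beta c_3+\tfrac32\beta^2c_1c_2+\tfrac12\beta^3c_1^3 .
\end{align*}
Substituting these yields $H_{2,2}(f)=a_2a_4-a_3^2$ as an explicit polynomial in $c_1,c_2,c_3$ with coefficients depending on $\beta$.

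Next I would use the standard Libera--Z{\l}otkiewicz parametrization. After a rotation we may take $c_1=c\in[0,2]$. Then
\begin{align*}
2c_2=c^2+\zeta(4-c^2),\qquad 4c_3=c^3+2(4-c^2)c\zeta-(4-c^2)c\zeta^2+2(4-c^2)(1-|\zeta|^2)\eta,
\end{align*}
with $|\zeta|\le1$ and $|\eta|\le1$. Inserting these gives
\begin{align*}
H_{2,2}(f)=\frac{\beta^2}{48}\Bigl(A(c)+B(c)\zeta+C(c)\zeta^2+D(c)(1-|\zeta|^2)\eta\Bigr),
\end{align*}
where $A,B,C,D$ are explicit polynomials in $c$ and $\beta$, and $D(c)=8c(4-c^2)$.

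The key step is bounding the modulus of this expression. Since $|\eta|\le1$, the triangle inequality gives the bound
\begin{align*}
\frac{\beta^2}{48}\Bigl(|A|+|B|x+|C|x^2+D(1-x^2)\Bigr)=:\frac{\beta^2}{48}F(c,x),
\end{align*}
with $x=|\zeta|\in[0,1]$. We must then show $\max F=48$ on $[0,2]\times[0,1]$. First I would check the boundary. At $c=0$ only the $\zeta^2$ term survives: it comes from $-a_3^2$, contributing $-\beta^2c_2^2/4$ with $c_2=2\zeta$. This gives $|H_{2,2}|\le\beta^2$, so $F(0,x)=48x^2\le 48$. At $c=2$ the expression reduces to $A(2)$, which is a polynomial in $\beta$ to be checked directly. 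For interior points, I would note that $F$ is a quadratic in $x$. For fixed $c$ I would maximize it (at a vertex or an endpoint) and then reduce to a one-variable polynomial inequality in $c$ and $\beta$.

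This last reduction is the main obstacle. The coefficients of $A,B,C$ can change sign depending on $\beta$, so the analysis will likely need to be split according to ranges of $\beta$ and $c$. The quadratic $F$ has coefficient $|C|-D$ of $x^2$. When this coefficient is negative, the maximum may sit at an interior vertex, and the claim becomes a polynomial inequality of the form $|B|^2\le 4(D-|C|)(48-|A|-D)$. I would verify this by elementary factoring.

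Finally, for sharpness take $c_1=c_3=0$ and $c_2=2$, i.e. $p(z)=(1+z^2)/(1-z^2)$. This $p$ corresponds to the extremal function $f(z)=z(1-z^2)^{-(1-\alpha)}$. For it, $a_2=a_4=0$ and $a_3=1-\alpha$, so $|H_{2,2}(f)|=(1-\alpha)^2$.
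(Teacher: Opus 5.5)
Your plan is correct and is essentially the paper's own argument, namely the proof of Theorem \ref{Th-21} specialized to $\mu=1$ and $X=\mathbb{C}$: the same Carath\'eodory reduction and Libera--Z{\l}otkiewicz formulas, the triangle inequality with $|\eta|\le1$, and maximization first in $|\zeta|$ and then in $c$. The case split you anticipate never arises, and computing $A,B,C$ settles the step you left open: with $\beta=1-\alpha$ one gets $A=(1-4\beta^2)c^4$, $B=2c^2(4-c^2)$ and $C=-(4-c^2)(c^2+12)$, so $|C|-D=(4-c^2)(2-c)(6-c)\ge0$, hence $F$ is increasing in $x$ and
$$F(c,x)\le F(c,1)=48-\bigl(3-|1-4\beta^2|\bigr)c^4\le 48.$$
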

Let $\mathcal{K}(\mathbb{U})$ denote the familiar subclass of $\mathcal{S}$ consisting of functions which are convex in $\mathbb{U}$. Then $f\in\mathcal{K}(\mathbb{U})$ if and only if $\Re\left( 1+\frac{z f''(z)}{f'(z)} \right) > 0$, $z \in \mathbb{U}$.
\par For the estimate of the Hankel determinant $H_{2,2}(f)$ over the class $\mathcal{K}(\mathbb{U})$, we have the following result.
\begin{theoB}\emph{\cite[Theorem 4.2.8]{Thomas-Tuneski}} Let $f\in\mathcal{K}(\mathbb{U})$. Then
\begin{align*}
	|H_{2,2}(f)|\leq \frac{1}{8}.
\end{align*}	
\end{theoB}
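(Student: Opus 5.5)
The plan is to derive Theorem B from the Carathéodory representation. If $f\in\mathcal{K}(\mathbb{U})$, then $p(z)=1+zf''(z)/f'(z)=1+\sum_{n\ge1}c_nz^n$ has positive real part. The first step is to compare coefficients in $f'(z)+zf''(z)=p(z)f'(z)$:
\begin{align*}
a_2=\frac{c_1}{2},\qquad a_3=\frac{c_2+c_1^2}{6},\qquad a_4=\frac{1}{24}\left(2c_3+3c_1c_2+c_1^3\right).
\end{align*}
Substituting gives
\begin{align*}
H_{2,2}(f)=\frac{1}{144}\left(6c_1c_3+c_1^2c_2-4c_2^2+c_1^4\right).
\end{align*}

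Next, invoke the standard parametrization of $c_2,c_3$ (Libera--Złotkiewicz). By rotation invariance of $\mathcal{K}(\mathbb{U})$ we may take $c_1=c\in[0,2]$. Then
\begin{align*}
2c_2&=c^2+x(4-c^2),\\
4c_3&=c^3+2c(4-c^2)x-c(4-c^2)x^2+2(4-c^2)(1-|x|^2)y,
\end{align*}
with $|x|\le1$ and $|y|\le1$. Inserting these into the expression for $144\,H_{2,2}(f)$ writes it as a polynomial in $c,x,y$. Then bound the modulus by the triangle inequality, using $|y|\le1$, $|x|=t\in[0,1]$, and collecting the $\bar x$-free terms. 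This yields $144|H_{2,2}(f)|\le F(c,t)$ for an explicit polynomial $F$ in $c^2$ and $t$.

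The main obstacle is maximizing $F$ on $[0,2]\times[0,1]$. The plan is to check that $\partial F/\partial t\ge0$, or otherwise locate the critical point in $t$, reduce to a one-variable function of $c$, and compare boundary values. I expect the maximum at $c=0$, $t=1$. There the only surviving term is $-4c_2^2$ with $|c_2|=2$, giving $144|H_{2,2}(f)|\le16$, i.e. $|H_{2,2}(f)|\le\frac19$.

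This exceeds the stated $\frac18$? No: $\frac19<\frac18$, so the crude estimate already implies the claim. Any careless loss in the triangle inequality that still stays below $18$ suffices. The bound $\frac19$ is attained by $f(z)=\frac12\log\frac{1+z}{1-z}$, for which $a_2=a_4=0$ and $a_3=\frac13$.

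If the two-variable optimization resists a clean argument, the fallback is a cruder split. Use $|c_n|\le2$ and the Fekete--Szegő type bound $|c_2-\lambda c_1^2|\le 2\max\{1,|2\lambda-1|\}$ on grouped terms. With $c_1^2c_2+c_1^4=c_1^2(c_2+c_1^2)$, we need to keep the total below $18$. Since the target $\frac18$ has slack relative to the true maximum, I expect a moderately careful grouping of this kind to suffice.
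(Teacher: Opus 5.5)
Your overall strategy is the right one, but the proposal has two errors, and as written it either fails or asserts something false.

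\textbf{First error: a sign in the coefficient identity.} From your (correct) formulas for $a_2,a_3,a_4$ one gets
\[
144\,H_{2,2}(f)=6c_1c_3+c_1^2c_2-4c_2^2-c_1^4,
\]
with $-c_1^4$, not $+c_1^4$. This agrees with \eqref{e3.17} at $\mu=1$. As a check, $f(z)=z/(1-z)$ has $c_n=2$ and $H_{2,2}(f)=0$, while your formula gives $\frac29$. The sign is not cosmetic. With the correct sign, the $c^4$-terms cancel after the Libera--Z{\l}otkiewicz substitution, leaving
\[
144\,H_{2,2}(f)=(4-c^2)\Bigl[\tfrac{3}{2}c^2x-\bigl(4+\tfrac{c^2}{2}\bigr)x^2+3c(1-|x|^2)y\Bigr].
\]
With your sign an extra $2c^4$ survives. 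Your majorant then equals $32$ at $c=2$, so no maximization could yield $\frac18$.

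\textbf{Second error: the bound $\frac19$ is false, and there is no slack.} Take $f$ with $f'(z)=1/(1-z+z^2)$. It is convex, since $1+zf''/f'=(1-z^2)/(1-z+z^2)$ has real part $(1-|z|^2)(1+|z|^2-\Re z)/|1-z+z^2|^2>0$. It has $a_2=\frac12$, $a_3=0$, $a_4=-\frac14$, so $|H_{2,2}(f)|=\frac18$; this is the extremal function behind Corollary~\ref{Cr-31}. For it, $c_1=1$, $c_2=-1$, $c_3=-2$.

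Since $\frac18$ is attained, every estimate in a valid chain must be an equality at this point. Any step that replaces $|c_1|$ or $|c_2|$ by $2$ is strict there. So your fallback via $|c_n|\le 2$ and Fekete--Szeg\H{o} groupings cannot close; already $|6c_1c_3|\le 24>18$.

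\textbf{What does work is your main plan, carried out correctly.} With $t=|x|$ and $|y|\le 1$,
\[
F(c,t)=(4-c^2)\Bigl[\tfrac{3}{2}c^2t+\bigl(4+\tfrac{c^2}{2}\bigr)t^2+3c(1-t^2)\Bigr],\qquad \partial_tF=(4-c^2)\Bigl[\tfrac{3}{2}c^2+(c-2)(c-4)t\Bigr]\ge 0 .
\]
Hence $F\le F(c,1)=(4-c^2)(4+2c^2)=18-2(c^2-1)^2\le 18$. The maximum is at $c=1$, $t=1$ (realized by $x=-1$), not at $c=0$.

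This is exactly the route the paper takes in proving Theorem~\ref{Th-31} with $\mu=1$, which recovers Theorem B. There, with $c_1=2u_1$, the bound becomes $\frac19(1+x-2x^2)$ with $x=u_1^2$, maximized at $x=\frac14$, i.e.\ $c_1=1$.
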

\begin{figure}[htbp]
\centering
\begin{tikzpicture}[scale=0.85]
	\begin{axis}[
		width=12cm, height=7cm,
		xlabel={Parameter $\alpha$},
		ylabel={Sharp bound for $|H_{2,2}(f)|$},
		title={Known sharp Hankel-determinant bounds (Theorems A and B)},
		title style={font=\small},
		legend style={
			at={(.95,0.90)},
			anchor=north east,
			font=\small
		},
		grid=both,
		grid style={line width=.1pt, draw=gray!30},
		major grid style={line width=.2pt,draw=gray!50},
		xmin=0, xmax=1, ymin=0, ymax=1.1,
		]
		\addplot[domain=0:1, samples=100, thick, color=blue] { (1-x)^2 };
		\addlegendentry{Starlike order $\alpha$, Thm.\ A}
		\addplot[domain=0:1, samples=100, dashed, thick, color=red] {0.125};
		\addlegendentry{Convex, Thm.\ B ($\alpha$-independent)}
	\end{axis}
\end{tikzpicture}
\caption{The paper extends two classical sharp bounds to higher dimensions: Theorem A gives the starlike bound $(1-\alpha)^2$, while Theorem B gives the constant $1/8$ for the convex class.
	Sections~\ref{Sec-3} onward show that the starlike bound remains unchanged on arbitrary complex Banach spaces and the unit polydisk $\mathbb U^n$; see Figure~\ref{fig:dimension_invariance}.}
\label{fig:bound_comparison_1}
\end{figure}

\vspace{1.2mm}

Before proceeding further, let us establish some fundamental terminology that will recur throughout this work. Let $X$ denote a complex Banach space with norm $||\cdot||$. The ball of radius $r$ is denoted by $\mathbb{B}_r =\{z\in X:||z||<r\}$. When $r=1$, we will simply use $\mathbb{B}$ and if $X=\mathbb{C}$, then $\mathbb{B}=\mathbb{U}$. Let $\partial \mathbb{B}$ denote the boundary of $\mathbb{B}$, $\overline{\mathbb{B}}$ its closure, and $\partial_0 \mathbb{U}^n$ the characteristic boundary (i.e., the boundary on which the maximum modulus of a holomorphic function is attained) of the unit polydisk $\mathbb{U}^n$.

Now let $X$ and $Y$ be two complex Banach spaces with respect to the norms $||.||_1$ and $||.||_2$ respectively. For simplicity, we denote both norms by $||.||$, when there is no possibility of confusion. Let $L(X, Y)$ denote the set of continuous linear operators from a complex Banach space $X$ into a complex Banach space $Y$ with the standard operator norm 
\begin{align*} 
	||A||=\sup\{||A(z)||: ||z||=1,\quad A\in L(X,Y)\}.
\end{align*} 

Let $I$ be the identity operator in $L(X, X)$. For each $z \in X \setminus \{0\}$, define the set  
\begin{align*}
	\displaystyle 
	T(z) = \{ l_z \in L(X, \mathbb{C}) : l_z(z) = \Vert{}z\Vert{}, \quad \Vert{}l_z\Vert{} = 1 \}.
\end{align*}

This set is guaranteed to be nonempty by the Hahn-Banach theorem. 
Let $\Omega$ be a domain in $X$ and let $f:\Omega \rightarrow Y$ be a mapping. Then $f$ is called holomorphic on $\Omega$ if for any $z\in\Omega$ there is a mapping $Df(z)\in L(X,Y)$, called the Fr\'echet derivative of $f$ at $z$ such that 
\begin{align*} \lim_{h\to 0} \frac{||f(z+h)-f(z)-Df(z)h||}{||h||}=0,\end{align*} i.e., 
\begin{align*} 
	f(z+h)=f(z)+Df(z)h+o(||h||).
\end{align*}

Let $\mathcal{H}(\Omega, \Omega')$ represent the set of all holomorphic mappings from a domain $\Omega \subset X$ into another domain $\Omega' \subset Y$. We simply write $\mathcal{H}(\Omega, X) = \mathcal{H}(\Omega)$. It is well known that if $f \in \mathcal{H}(\Omega,Y)$, then for all $w$ in some neighborhood of $z \in \Omega$, the function can be expressed as the power series expansion:
\begin{align*}
	\displaystyle
	f(w) = \sum_{n=0}^{\infty} \frac{1}{n!} D^n f(z)((w - z)^n),
\end{align*}
where $D^n f(z)$ is a bounded symmetric $n$-linear mapping from $\prod_{j=1}^n X$ into $Y$, called the $n$-th Fr\'echet derivative of $f$ at $z$. Moreover, the term $D^n f(z)((w - z)^n)$ is shorthand for applying the $n$-linear mapping to $n$ copies of the vector $(w - z)$:
\begin{align*}
	\displaystyle
	D^n f(z)((w - z)^n) = D^n f(z)(\underbrace{w - z, w - z, \dots, w - z}_{n \text{ times}})
\end{align*}
where $D^0f(z)=f(z)$.\vspace{1.2mm}

A mapping $f \in \mathcal{H}(\mathbb{B})$ is said to be normalized if $f(0) = 0$ and $Df(0) = I$, where $I \in L(X,X)$ is the identity operator. It is called biholomorphic if $f$ is bijective with a holomorphic inverse $f^{-1}$ on $f(\mathbb{B})$ and locally biholomorphic if $Df(z)$ has a bounded inverse for every $z \in \mathbb{B}$. The family of all normalized biholomorphic mappings on $\mathbb{B}$ is denoted by $\mathcal{S}(\mathbb{B})$. \vspace{1.2mm}

A case of particular interest is that of the $p$-norm in
$\mathbb{C}^{n}$, that is,
\begin{align*}
	\displaystyle
	\|z\|_{p}=
	\begin{cases}
		\left(
		\displaystyle\sum_{j=1}^{n}|z_{j}|^{p}
		\right)^{1/p},
		& 1\le p<\infty, \\[2ex]
		\displaystyle\max_{1\le j\le n}|z_{j}|,
		& p=\infty.
	\end{cases}
\end{align*}

Let $\mathbb{B}(p)$ denote the unit ball with respect to $\|\cdot\|_{p}$.
When $p=\infty$, $\mathbb{B}(p)$ reduces to the unit polydisk $\mathbb{U}^n$ of $\mathbb{C}^{n}$. It is known that if $X=\mathbb{C}^{n}$ with respect to the maximum norm $\Vert{}.\Vert{}_\infty $, and $l_z\in T(z)$, then
\begin{align}\label{eq3}
\displaystyle
l_z(w)=
\sum_{\{j:\|z\|_{\infty}=|z_j|\}}
t_j\,
\frac{w_j\overline{z_j}}
{\|z\|_{\infty}},
\end{align}
where each $t_j\geq0$ and $\sum\limits_{\{j:\|z\|_{\infty}=|z_j|\}} t_j=1.$\vspace{1.2mm}

The following family plays a key role in our discussion:
\begin{align*}
	\displaystyle
	\mathcal{M}=\{f\in\mathcal{S}(\mathbb{B}),\; \Re\{l_z(f(z))\}>0,\; z\in\mathbb{B}\backslash\{0\},\; l_z\in T(z)\}.
\end{align*}
In the case of the maximum norm $\Vert{}.\Vert{}_\infty $ on $\mathbb{C}^n$, the class $\mathcal{M}$ consists of all normalized holomorphic mappings $w:\mathbb{U}^n \to \mathbb{C}^n$ satisfying
\begin{align*}
	\displaystyle
	\Re\left\lbrace \frac{w_j(z)}{z_j} \right\rbrace >0
\end{align*}
whenever $\Vert{}z\Vert{}_\infty = \vert{}z_j\vert{} > 0$ for $1 \le j \le n$.\vspace{1.4mm}

In several complex variables, classical results from geometric function theory often fail to hold in their original form. Most notably, Poincar\'e disproved the higher-dimensional analogue of the Riemann mapping theorem, while Cartan constructed counterexamples to the Bieberbach conjecture on the unit ball. These fundamental obstacles have necessitated the development of new methodologies and the identification of appropriate subclasses that preserve the essential features of the classical theory.

As a result, extending univalent function theory to higher dimensions requires introducing appropriate subclasses or additional structural restrictions. Recent years have witnessed remarkable progress in establishing coefficient bounds for various subclasses of normalized biholomorphic mappings in higher dimensions. Notable contributions in this direction include the works of Bracci et al. \cite{Bracci-2016}, Bracci and Roth \cite{Bracci-2018}, Graham et al. \cite{Graham-2002}, \cite{Graham-2016}, Hamada and Honda \cite{Hamada-Honda-2008}; Hamada and Kohr \cite{Hamada-Kohr-2020}.
\begin{figure}[htbp]
	\centering
	\begin{tikzpicture}[scale=0.9]
		\begin{axis}[
			width=12cm, height=6.5cm,
			xlabel={Dimension $n$ (or ``$n=\infty$'' for a general Banach space $X$)},
			ylabel={Sharp constant in the bound},
			title={The sharp constant $(1-\alpha)^2$ is independent of dimension},
			title style={font=\small},
			xtick={1,2,3,4,5,6},
			xticklabels={$1$,$2$,$3$,$4$,$5$,$X$ (Banach)},
			ymin=0, ymax=1.1, xmin=0.5, xmax=6.5,
			grid=both,
			grid style={line width=.1pt, draw=gray!30},
			legend pos=south east,
			]
		
			\addplot[thick, color=blue, mark=*, mark size=2.2pt, only marks]
			coordinates {(1,1)(2,1)(3,1)(4,1)(5,1)(6,1)};
			\addlegendentry{Sharp constant $(1-\alpha)^2/(1-\alpha)^2=1$ (normalized)}
			\addplot[dashed, gray, thick] coordinates {(0.5,1)(6.5,1)};
			\node[draw, fill=white, rounded corners, font=\footnotesize, align=left] at (axis cs:3.3,0.55)
			{Thm.\ A ($n=1$), Cor.\ \ref{Cr-22} ($n\ge2$),\\
				Cor.\ \ref{Cr-21} (general Banach space $X$):\\
				all give the \emph{same} sharp constant $(1-\alpha)^2$};
		\end{axis}
	\end{tikzpicture}
	\caption{Unlike many higher-dimensional extensions, the sharp bound here for the generalized Hankel determinant is $(1-\alpha)^2$, up to the natural factor $|z|_\infty^6$.
		Remarkably, this constant is independent of dimension, holding on $\mathbb U$, $\mathbb U^n$ for every $n\ge2$, and the unit ball of any complex Banach space $X$.
		Values are normalized to $1$ to emphasize this dimension-independence.}
	\label{fig:dimension_invariance}
\end{figure}

$\bullet$ Inspired by the Hankel determinant $H_{2,2}(f)$, we introduce the following generalized Hankel functional depending on a parameter $\mu\in[0,1]$:  $$H^{\mu}_{2,2}(f)=\mu a_2a_4- a_3^2,\qquad \mu\in[0,1].$$
$H^{\mu}_{2,2}(f)$ reduces to $H_{2,2}(f)$ when $\mu=1$.  

Clearly, $H_{2,2}^{\mu}(f)$ reduces to the classical Hankel determinant $H_{2,2}(f)$ when $\mu=1$. Motivated by this generalization, we study the sharp estimates of $H_{2,2}^{\mu}(f)$ for several important classes of holomorphic mappings in higher-dimensional settings. In particular, we extend the notion of starlike mappings and quasi-convexity mappings of type $B$ in complex Banach space, generalizing the classical definition recalled in Theorem A and Theorem B above. More precisely, we obtain the following results:

\par\emph{$(i)$} We obtain the sharp estimate of the Hankel functional $H_{2,2}^{\mu}(f)$ for starlike mappings of order $\alpha$ on the unit ball in a complex Banach space and on the unit polydisk $\mathbb{U}^n$ in $\mathbb{C}^n$.

\par\emph{$(ii)$} We establish the sharp estimate of the Hankel functional $H_{2,2}^{\mu}(f)$ for quasi-convex mappings of type $B$ on the unit ball in a complex Banach space as well as on the unit polydisk $\mathbb{U}^n$ in $\mathbb{C}^n$.
\section{\bf Bounds for $H_{2,2}^{\mu}(f)$ over the class of starlike mappings of order $\alpha$ in complex Banach spaces}\label{Sec-3} 
A biholomorphic mapping on $\mathbb{B}$ is said to be starlike if $f(\mathbb{B})$ is starlike with
respect to the origin. If $\mathbb{B}$ is the unit ball of $X$, a complex Banach space, we let $\mathcal{S}^*(\mathbb{B})$ denote the subset of $\mathcal{S}(\mathbb{B})$ consisting of normalized starlike mappings from $\mathbb{B}$ into $X$.

In this section, we recall the definition of starlikeness of order $\alpha$ in complex Banach space.
\begin{defi}\emph{\cite[Definition 1.2]{Hamada-2001}}
	Let $f:\mathbb{B}\to X$ be a normalized locally biholomorphic mapping and let $\alpha\in [0,1)$. We say that $f$ is starlike of order $\alpha$ if 
	\begin{align}\label{SL1}
		\Re\left\lbrace \frac{||z||}{l_z([Df(z)]^{-1}f(z))}\right\rbrace>\alpha,\quad z\in\mathbb{B}\backslash\{0\},\quad l_z\in T(z).
	\end{align}
\end{defi}

Let $\mathcal{S}^*_{\alpha}(\mathbb{B})$ denote the class of starlike mappings of order $\alpha$ on $\mathbb{B}$.\vspace{1.2mm}

 There are now many references for starlike mappings in higher dimensions. For more information, see the recent monographs by Gong  \cite{Gong} and Kohr \& Liczberski  \cite{Kohr-Liczberski-1998}. Some foundational and original papers in this area include 
 \cite{Chu}, \cite{SSQ_CHMS_1990}, \cite{Su2}-\cite{Xu-He-Xu}.
 \vspace{1.1mm}
 
The following results provide bounds of the parameterized Hankel functional for the class of starlike mappings of order $\alpha$ in higher dimensions.
 
 \begin{theo}\label{Th-21}Let $f\in\mathcal{H}(\mathbb{B},\mathbb{C})$ with $f(0)=1$ and $F(z)=zf(z)\in\mathcal{S}^*_{\alpha}(\mathbb{B})$ and let $\mu\in [0,1]$. Then
 \begin{align*}
 	\left|\mu\frac{l_{z}(D^2F(0)(z^2))}{2!\|z\|^2}\frac{l_{z}(D^4F(0)(z^4))}{4!\|z\|^4}-\left(\frac{l_z(D^3F(0)(z^3))}{3!\|z\|^3}\right)^2\right|\le{(1-\alpha)^2}(9-8\mu).
 \end{align*}
 \end{theo}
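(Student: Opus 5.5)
Fix $z\in\mathbb{B}\setminus\{0\}$ and put $z_0=z/\|z\|$. Since $F(z)=zf(z)$, the Fréchet derivative is $DF(z)=f(z)I+z\,Df(z)$. This operator can be inverted explicitly:
\[
[DF(z)]^{-1}w=\frac{1}{f(z)}\Big(w-\frac{Df(z)w}{f(z)+Df(z)z}\,z\Big).
\]
Hence
\[
[DF(z)]^{-1}F(z)=\frac{f(z)}{f(z)+Df(z)z}\,z .
\]
Applying $l_z$ and using $l_z(z)=\|z\|$, condition \eqref{SL1} becomes
\[
\Re\Big\{\frac{f(z)+Df(z)z}{f(z)}\Big\}>\alpha,\qquad z\in\mathbb{B}\setminus\{0\}.
\]
Now restrict to the slice $\zeta\mapsto \zeta z_0$, $\zeta\in\mathbb{U}$, and set $g(\zeta)=\zeta f(\zeta z_0)$. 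Then $g$ is holomorphic on $\mathbb{U}$ with $g(0)=0$, $g'(0)=1$, and
\[
\frac{\zeta g'(\zeta)}{g(\zeta)}=\frac{f(\zeta z_0)+Df(\zeta z_0)(\zeta z_0)}{f(\zeta z_0)}.
\]
So the condition above says exactly that $g\in\mathcal{S}^*_\alpha(\mathbb{U})$. The whole problem is thus reduced to a one-variable inequality.

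Next I identify the coefficients. Write $g(\zeta)=\zeta+\sum_{k\ge2}b_k\zeta^k$. Since $D^kF(0)(z^k)/k!=z\,D^{k-1}f(0)(z^{k-1})/(k-1)!$, applying $l_z$ gives
\[
\frac{l_z(D^kF(0)(z^k))}{k!\,\|z\|^k}=\frac{D^{k-1}f(0)(z_0^{k-1})}{(k-1)!}=b_k .
\]
The left-hand side of the theorem is therefore $|\mu b_2b_4-b_3^2|$, and the statement becomes: for $g\in\mathcal{S}^*_\alpha(\mathbb{U})$ and $\mu\in[0,1]$,
\[
|\mu b_2b_4-b_3^2|\le(1-\alpha)^2(9-8\mu).
\]

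For the one-variable estimate, write $\zeta g'/g=\alpha+(1-\alpha)p$ with $p(\zeta)=1+c_1\zeta+c_2\zeta^2+\cdots$ in the Carathéodory class, so $|c_k|\le 2$. Comparing coefficients gives
\[
b_2=(1-\alpha)c_1,\qquad b_3=\tfrac12\big[(1-\alpha)c_2+(1-\alpha)^2c_1^2\big],
\]
and a similar expression for $b_4$. The crude route is the triangle inequality
\[
|\mu b_2b_4-b_3^2|\le \mu|b_2b_4-b_3^2|+(1-\mu)|b_3|^2 .
\]
Theorem A gives $|b_2b_4-b_3^2|\le(1-\alpha)^2$, and the known sharp bound for $\mathcal{S}^*_\alpha(\mathbb{U})$ is $|b_3|\le(1-\alpha)(3-2\alpha)$. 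This yields
\[
(1-\alpha)^2\mu+(1-\mu)(1-\alpha)^2(3-2\alpha)^2\le(1-\alpha)^2\big[\mu+9(1-\mu)\big]=(1-\alpha)^2(9-8\mu),
\]
using $(3-2\alpha)\le 3$. That is exactly the claimed bound.

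The main obstacle is justifying the two one-variable inputs. Theorem A is stated in the paper. The bound $|b_3|\le(1-\alpha)(3-2\alpha)$ is the classical coefficient estimate for starlike functions of order $\alpha$, which I would quote. If a self-contained argument is preferred, one can instead apply Fekete–Szegő type estimates to $c_2-\frac12c_1^2$ and bound $|c_2|,|c_1|\le2$ directly. A secondary point is checking that $f(z)+Df(z)z\neq0$, so that the inverse formula for $DF(z)$ is legitimate. This follows from $F$ being locally biholomorphic: $DF(z)z=(f(z)+Df(z)z)\,z$, so $f(z)+Df(z)z=0$ would put $z$ in the kernel of $DF(z)$.
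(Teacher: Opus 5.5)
Your proof is correct. The reduction to one variable is the paper's in substance: the paper's auxiliary function $g(\zeta)=1+Df(\zeta z_0)(\zeta z_0)/f(\zeta z_0)$ is exactly $\zeta G'(\zeta)/G(\zeta)$ for your slice $G(\zeta)=\zeta f(\zeta z_0)$. The core estimate, however, goes by a genuinely different route. The paper expands the functional in Carath\'eodory coefficients as $\frac{(1-\alpha)^2}{12}\bigl|4\mu h_1h_3-3h_2^2+6(\mu-1)(1-\alpha)h_1^2h_2+(2\mu-3)(1-\alpha)^2h_1^4\bigr|$. It then inserts the Libera--Zlotkiewicz representation of $h_2,h_3$ and maximizes by hand over $h_1=t$, $|x|$, $|\eta|$. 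You instead write $\mu b_2b_4-b_3^2=\mu(b_2b_4-b_3^2)-(1-\mu)b_3^2$ and use Theorem A together with $|b_3|\le(1-\alpha)(3-2\alpha)$.

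Your route is much shorter and sidesteps the paper's parameter bookkeeping. For instance, the value of $|B|$ the paper records is its value at $\alpha=0$, so for $\alpha>0$ that step needs a separate check. Your route also yields the stronger bound $(1-\alpha)^2\bigl[\mu+(1-\mu)(3-2\alpha)^2\bigr]$. This is strictly below $(1-\alpha)^2(9-8\mu)$ when $\alpha>0$ and $\mu<1$, and at $\mu=0$ it is attained by the one-variable function $\zeta/(1-\zeta)^{2(1-\alpha)}$. What the paper's method buys is independence from Theorem A: its case $\mu=1$ comes from the Carath\'eodory lemma alone, whereas your proof uses Theorem A as a black box.

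One small simplification: you need no Fekete--Szeg\H{o} estimate for the $b_3$ bound. It follows at once from $b_3=\frac12\bigl[(1-\alpha)c_2+(1-\alpha)^2c_1^2\bigr]$ and $|c_1|,|c_2|\le 2$.
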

\begin{figure}[htbp]
	\centering
	\begin{tikzpicture}
		\begin{axis}[
			title={\textbf{Parameter Dependence of the Bound for $H_{2,2}^{\mu}(f)$}},
			xlabel={$\alpha$},
			ylabel={$\mu$},
			zlabel={\;\;\;Bound Value},
			xmin=0, xmax=1,
			ymin=0, ymax=1,
			zmin=0, zmax=10,
			colormap/viridis,
			view={45}{35},
			width=0.86\textwidth,
			height=0.7\textwidth,
			grid=major,
			grid style={line width=0.5pt, draw=gray!65},
			minor grid style={line width=0.15pt, draw=gray!30},
			xmajorgrids=true,
			ymajorgrids=true,
			zmajorgrids=true,
			xminorgrids=true,
			yminorgrids=true,
			zminorgrids=true,
			minor tick num=9,
			ticklabel style={font=\small, /pgf/number format/.cd, fixed, fixed zerofill, precision=1},
			xlabel style={font=\small, anchor=north},
			ylabel style={font=\small},
			zlabel style={font=\small},
			title style={font=\large\bfseries},
			colorbar,
			colorbar style={
				ylabel={Bound Value $(1-\alpha)^2(9-8\mu)$}, 
				ylabel style={font=\small},
				ticklabel style={font=\small},
				grid=major,
				grid style={line width=0.2pt, draw=gray!40},
				minor tick num=4
			},
			legend style={
				at={(0.02,0.98)}, 
				anchor=north west, 
				font=\small, 
				fill=white, 
				fill opacity=0.85,
				draw=black!50
			},
			samples=50,
			shader=interp,
			point meta min=0,
			point meta max=10,
			xtick={0,0.1,0.2,0.3,0.4,0.5,0.6,0.7,0.8,0.9,1},
			ytick={0,0.1,0.2,0.3,0.4,0.5,0.6,0.7,0.8,0.9,1},
			ztick={0,1,2,3,4,5,6,7,8,9,10}
			]
			\addplot3[
			surf,
			shader=interp,
			domain=0:1,
			y domain=0:1
			] { min(9, (1-x)^2 * (9 - 8*y)) };
			\addlegendentry{$(1-\alpha)^2(9-8\mu)$}

		\end{axis}
	\end{tikzpicture}
	\caption[Parameter dependence of the sharp bound]{%
		\textbf{Parameter dependence of the upper bound} 
		$(1-\alpha)^2(9-8\mu)$ in Theorem~\ref{Th-21} for 
		$0\leq\alpha,\mu\leq 1$. The dense grid structure enhances the 
		three-dimensional visualization of the \textbf{quadratic decay} 
		with respect to $\alpha$ and \textbf{linear decay} with respect to $\mu$.%
	}
	\label{fig:bound_surface}
\end{figure}

 From Theorem \ref{Th-21}, we get the following corollary.
  
 \begin{cor}\label{Cr-21}Let $f\in\mathcal{H}(\mathbb{B},\mathbb{C})$ with $f(0)=1$ and $F(z)=zf(z)\in\mathcal{S}^*_{\alpha}(\mathbb{B})$. Then
 	\begin{align*}
 		\left|\frac{l_{z}(D^2F(0)(z^2))}{2!\|z\|^2}\frac{l_{z}(D^4F(0)(z^4))}{4!\|z\|^4}-\left(\frac{l_z(D^3F(0)(z^3))}{3!\|z\|^3}\right)^2\right|\le (1-\alpha)^2.
 	\end{align*}
 	The bound is sharp.
 \end{cor}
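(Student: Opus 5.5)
The bound is Theorem \ref{Th-21} at $\mu=1$, since $9-8\mu=1$ there. So the inequality in Corollary \ref{Cr-21} follows at once, and what remains is to show it is sharp with an explicit extremal mapping.

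For the extremal mapping I would lift the one-dimensional extremal function of Theorem A. Fix a unit vector $u\in\partial\mathbb{B}$ and $l_u\in T(u)$, and set
\begin{align*}
F(z)=\frac{z}{(1-l_u(z)^2)^{1-\alpha}},\qquad f(z)=(1-l_u(z)^2)^{\alpha-1},
\end{align*}
so that $f(0)=1$ and $F(z)=zf(z)$.

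The first step is to check that $F\in\mathcal{S}^*_\alpha(\mathbb{B})$. This uses the standard criterion for mappings of the form $F(z)=zf(z)$ with $f$ scalar: $F$ is starlike of order $\alpha$ if and only if
\begin{align*}
\Re\left\{\frac{1}{1+Df(z)z/f(z)}\right\}>\alpha \quad\text{on } \mathbb{B}
\end{align*}
(the same computation that underlies Theorem \ref{Th-21}). Write $\zeta=l_u(z)$, so $|\zeta|<1$. Then $Df(z)z/f(z)=2(1-\alpha)\zeta^2/(1-\zeta^2)$. The criterion therefore reduces to the one-variable condition $\Re\{zg'(z)/g(z)\}>\alpha$ for $g(\zeta)=\zeta/(1-\zeta^2)^{1-\alpha}$, which is exactly the starlikeness of order $\alpha$ of Cho et al.'s extremal function. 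If needed, I would also verify directly that $DF(z)$ is invertible and that $[DF(z)]^{-1}F(z)=z/(1+Df(z)z/f(z))$.

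The second step is to compute the coefficients. Expand
\begin{align*}
F(z)=z\Big(1+(1-\alpha)l_u(z)^2+\tfrac{(1-\alpha)(2-\alpha)}{2}l_u(z)^4+\cdots\Big).
\end{align*}
Reading off the homogeneous terms gives $D^2F(0)(z^2)=0$, $D^3F(0)(z^3)/3!=(1-\alpha)l_u(z)^2z$, and a similar expression for $D^4F(0)$. Now take $z=ru$ with $0<r<1$, so $l_z=l_u$. The first term in the functional vanishes, and
\begin{align*}
\frac{l_z(D^3F(0)(z^3))}{3!\|z\|^3}=(1-\alpha)\frac{r^2\cdot r}{r^3}=1-\alpha.
\end{align*}
The left-hand side of Corollary \ref{Cr-21} therefore equals $(1-\alpha)^2$, and the bound is attained.

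I expect the only real obstacle to be the starlikeness check in the first step, that is, the correct form of the criterion for $zf(z)$ in a general Banach space. If quoting it is not acceptable, I would derive it from \eqref{SL1} by computing $DF(z)w=f(z)w+(Df(z)w)z$ and inverting this rank-one perturbation of the scalar operator $f(z)I$ explicitly.
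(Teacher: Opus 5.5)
Your argument is the same as the paper's. The inequality is Theorem \ref{Th-21} at $\mu=1$. Sharpness uses the same mapping $F(z)=z/(1-l_u(z)^2)^{1-\alpha}$ at $z=ru$, where $D^2F(0)=0$ and $l_z(D^3F(0)(z^3))/(3!\|z\|^3)=1-\alpha$.

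One correction is needed in your membership check. From \eqref{SL1} and $[DF(z)]^{-1}F(z)=z/(1+Df(z)z/f(z))$ you get $\|z\|/l_z([DF(z)]^{-1}F(z))=1+Df(z)z/f(z)$ (cf. \eqref{Th-21.4}). So the criterion is $\Re\{1+Df(z)z/f(z)\}>\alpha$, not the reciprocal you displayed. Taken literally, your displayed criterion fails for this $F$ whenever $\alpha>0$: it equals $(1-\zeta^2)/(1+(1-2\alpha)\zeta^2)$, which tends to $0$ as $\zeta=l_u(z)\to1^-$.

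Your next step, the reduction to $\Re\{\zeta g'(\zeta)/g(\zeta)\}>\alpha$ with $\zeta g'(\zeta)/g(\zeta)=\alpha+(1-\alpha)\frac{1+\zeta^2}{1-\zeta^2}$, is correct for the right criterion. With that fix, your verification is more complete than the paper's. The paper only cites Lemma 2.2 of Xu--He--Xu and asserts $F\in\mathcal{S}^*(\mathbb{B})$ rather than $F\in\mathcal{S}^*_\alpha(\mathbb{B})$.

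A minor point: $D^4F(0)=0$ here, since $f$ is even in $l_u(z)$, so there is no ``similar expression'' for it. This does not affect the conclusion, because $D^2F(0)=0$ already kills the product term.
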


 To establish the sharpness of the bound of Corollary \ref{Cr-21}, let us consider the function $F$ defined by
 \begin{align*}
 	F(z)=zf(z)=\frac{z}{\left(1-l_u(z)^2\right)^{1-\alpha}},\quad z\in\mathbb{B},\quad \|u\|=1.
 \end{align*}
 
 According to Lemma 2.2 \cite{Xu-He-Xu}, it is clear that $F\in\mathcal{S}^*(\mathbb{B})$. Note that 
 \begin{align*}
 	f(z)=\sum_{n=0}^{\infty}\frac{(1-\alpha)(2-\alpha)\ldots (n-\alpha)}{n!}(l_u(z))^{2n},\quad z\in\mathbb{B}.
 \end{align*}
 
 Since $F(z)=zf(z)$, we have (see \cite[Theorem 2]{Xiong-Yu-Wang-2025})
 \begin{align}\label{Th-2x}
 	D^mF(0)(z^m)=mD^{m-1}f(0)(z^{m-1})z,
 \end{align}
 where $m\in\mathbb{N}$.
 Consequently from \eqref{Th-2x} it follows that
 \begin{align}\label{Th-2y}
 	\displaystyle
 	\frac{l_{z}(D^mF(0)(z^m))}{m!}=\frac{D^{m-1}f(0)(z^{m-1})}{(m-1)!}||z||.
 \end{align}
 
 Clearly \eqref{Th-2y} yields
 \begin{align*}
 	\frac{l_z(D^2F(0)(z^2))}{2!}=0,\;\;
 	\frac{l_z(D^3F(0)(z^3))}{3!}=(1-\alpha)(l_u(z))^2\|z\|\;\;\text{and}\;\;
 	\frac{l_z(D^4F(0)(z^4))}{4!}=0.
 \end{align*}
 
 Taking $z=ru\;(0<r<1)$, we get
 \begin{align*}
 	\frac{l_z(D^2F(0)(z^2))}{2!||z||^2}=0,\;\;
 	\frac{l_z(D^3F(0)(z^3))}{3!||z||^3}=(1-\alpha)\; \text{and} \;
 	\frac{l_z(D^4F(0)(z^4))}{4!||z||^4}=0.
 \end{align*}
 
 Therefore
 \begin{align*}
 	\left|\frac{l_{z}(D^2F(0)(z^2))}{2!\|z\|^2}\frac{l_{z}(D^4F(0)(z^4))}{4!\|z\|^4}-\left(\frac{l_z(D^3F(0)(z^3))}{3!\|z\|^3}\right)^2\right|=(1-\alpha)^2.
 \end{align*}
\begin{theo}\label{Th-22}Let $f\in\mathcal{H}(\mathbb{U}^n,\mathbb{C})$ with $f(0)=1$ and $F(z)=zf(z)\in\mathcal{S}^*_{\alpha}(\mathbb{U}^n)$ and let $\mu\in[0,1]$. Then
 \begin{align*}
 \displaystyle
 &\left\|\frac{1}{3!}D^3F(0)\left(z^2,\frac{D^3F(0)(z^3)}{3!}\right)-\mu\frac{1}{4!}D^4F(0)\left(z^3,\frac{D^2F(0)(z^2)}{2!}\right)\right\|_{\infty}\\\leq&
 {(1-\alpha)^2}(9-8\mu)||z||_{\infty}^6\nonumber
 \end{align*}
 for all $z\in\mathbb{U}^n$.
 \end{theo}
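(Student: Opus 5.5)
The plan is to reduce the vector-valued estimate on $\mathbb{U}^n$ to the scalar inequality of Theorem \ref{Th-21}, applied coordinate by coordinate. Since $F(z)=zf(z)$, formula \eqref{Th-2x} gives $D^mF(0)(z^m)/m! = \bigl(D^{m-1}f(0)(z^{m-1})/(m-1)!\bigr)z$. Polarizing this identity (the derivatives are symmetric multilinear maps) yields, for any $w\in\mathbb{C}^n$,
\begin{align*}
\frac{1}{3!}D^3F(0)(z^2,w)=\frac{1}{3}\left(\frac{D^2f(0)(z^2)}{2!}w+2\frac{D^2f(0)(z,w)}{2!}z\right),
\end{align*}
and a similar formula holds for $D^4F(0)(z^3,w)$. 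Substituting $w=D^3F(0)(z^3)/3!=p_2(z)z$, resp. $w=D^2F(0)(z^2)/2!=p_1(z)z$, where $p_k(z)=D^kf(0)(z^k)/k!$, and using homogeneity, the $j$-th coordinate of the expression inside the norm becomes
\begin{align*}
z_j\bigl(p_2(z)^2-\mu\,p_1(z)p_3(z)\bigr).
\end{align*}
Hence the left-hand side equals $\|z\|_\infty\,|p_2(z)^2-\mu p_1(z)p_3(z)|$.

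Next, fix $z\in\mathbb{U}^n\setminus\{0\}$ and choose $j$ with $|z_j|=\|z\|_\infty$. Take $l_z(w)=w_j\overline{z_j}/|z_j|$ from \eqref{eq3}. Then \eqref{Th-2y} gives $l_z(D^mF(0)(z^m))/m!=p_{m-1}(z)\|z\|_\infty$. The quantity bounded in Theorem \ref{Th-21} is therefore exactly
\begin{align*}
\bigl|\mu p_1(z)p_3(z)-p_2(z)^2\bigr|/\|z\|_\infty^{5}.
\end{align*}
Theorem \ref{Th-21} holds on the unit ball of any complex Banach space, in particular on $\mathbb{U}^n$ with the max norm, so this quantity is at most $(1-\alpha)^2(9-8\mu)$. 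Multiplying by $\|z\|_\infty^{6}$ gives the claim, and the case $z=0$ is trivial.

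The main obstacle is the polarization bookkeeping. One must check that the cross terms $D^2f(0)(z,w)$ with $w\propto z$ collapse, by homogeneity, to multiples of $p_k(z)$, so that every coordinate carries the common factor $z_j$ with the same scalar coefficient; the coefficients $1/3,2/3$ and $1/4,3/4$ must sum to $1$. If a direct check is awkward, I would instead verify the identity on each homogeneous piece using $F_j(z)=z_jf(z)$ and differentiate explicitly.
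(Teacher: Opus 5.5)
There is a genuine gap, and it sits exactly where the power of $\|z\|_\infty$ is decided. Your polarization step is correct: the expression inside the norm equals $\bigl(p_2(z)^2-\mu p_1(z)p_3(z)\bigr)z$, so its norm is $\|z\|_\infty\,|p_2(z)^2-\mu p_1(z)p_3(z)|$. The error is in the next step. The quantity in Theorem \ref{Th-21} is not $|\mu p_1p_3-p_2^2|/\|z\|_\infty^5$. Since $l_z(D^mF(0)(z^m))/(m!\,\|z\|^m)=p_{m-1}(z)/\|z\|^{m-1}$, the two products carry $\|z\|^{1+3}$ and $\|z\|^{2+2}$ in the denominator. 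So that quantity is $|\mu p_1(z)p_3(z)-p_2(z)^2|/\|z\|_\infty^4$. It has to be, because it is invariant under $z\mapsto tz$ for $t>0$, and your version is not. Theorem \ref{Th-21} therefore gives $|p_2^2-\mu p_1p_3|\le(1-\alpha)^2(9-8\mu)\|z\|_\infty^4$, and your argument proves
\[
\left\|\frac{1}{3!}D^3F(0)\left(z^2,\frac{D^3F(0)(z^3)}{3!}\right)-\mu\frac{1}{4!}D^4F(0)\left(z^3,\frac{D^2F(0)(z^2)}{2!}\right)\right\|_{\infty}\le(1-\alpha)^2(9-8\mu)\|z\|_\infty^5 ,
\]
not the stated power $6$. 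The $6$ comes only from the miscount.

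No argument can produce the power $6$. The left-hand side is a homogeneous polynomial of degree $5$ in $z$, so a bound $C\|z\|_\infty^6$ on $\mathbb{U}^n$ would force it to vanish identically (replace $z$ by $tz$ and let $t\to0$). Concretely, take $F(z)=z(1-z_1^2)^{\alpha-1}\in\mathcal{S}^*_{\alpha}(\mathbb{U}^n)$ and $z=(r,0,\ldots,0)$. Then $p_1=p_3=0$ and $p_2=(1-\alpha)r^2$, so the left side is $(1-\alpha)^2r^5$. This exceeds $(1-\alpha)^2(9-8\mu)r^6$ for $0<r<1/(9-8\mu)$, and for every $r\in(0,1)$ when $\mu=1$. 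So the statement as printed is false.

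The paper's proof reaches $\|z\|_\infty^6$ only in its unjustified last line. After bounding each coordinate on $\partial_0\mathbb{U}^n$ and applying the maximum modulus principle, homogeneity gives $\|z\|_\infty^5$. Its sharpness computation for Corollary \ref{Cr-22} likewise actually yields $(1-\alpha)^2r^5$.

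With the exponent corrected to $5$, your argument is valid and genuinely different from the paper's. The paper re-runs the Carath\'eodory-function argument coordinatewise, through the functions $g_k$ built from $([DF]^{-1}F)_k$. It obtains the bound for the $k$-th coordinate only at points with $|z_k|=\|z\|_\infty$, and needs the distinguished boundary and the maximum modulus principle to cover every coordinate. Your factorization through the scalar $p_2^2-\mu p_1p_3$ makes that machinery unnecessary. It shows the polydisk result is a direct special case of Theorem \ref{Th-21} on $(\mathbb{C}^n,\|\cdot\|_\infty)$, and it makes the correct degree of homogeneity visible.
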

 
 From Theorem \ref{Th-22}, we get the following corollary.
 \begin{cor}\label{Cr-22}Let $f\in\mathcal{H}(\mathbb{U}^n,\mathbb{C})$ with $f(0)=1$ and $F(z)=zf(z)\in\mathcal{S}^*_{\alpha}(\mathbb{U}^n)$ and let $\mu=1$. Then
 \begin{align*}
 \displaystyle
 &\left\|\frac{1}{3!}D^3F(0)\left(z^2,\frac{D^3F(0)(z^3)}{3!}\right)-\frac{1}{4!}D^4F(0)\left(z^3,\frac{D^2F(0)(z^2)}{2!}\right)\right\|_{\infty}\\\leq&
 (1-\alpha)^2||z||_{\infty}^6\nonumber
 \end{align*}
 for all $z\in\mathbb{U}^n$. The bound is sharp.
 \end{cor}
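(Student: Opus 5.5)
The plan is to deduce Corollary \ref{Cr-22} directly from Theorem \ref{Th-22} and then exhibit an extremal mapping for sharpness. Setting $\mu=1$ in Theorem \ref{Th-22}, the right-hand side becomes $(1-\alpha)^2(9-8\cdot 1)\|z\|_\infty^6=(1-\alpha)^2\|z\|_\infty^6$, and the left-hand side is exactly the expression in the corollary. So the inequality needs no further argument.

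For sharpness I would use the polydisk analogue of the extremal function used for Corollary \ref{Cr-21}:
\begin{align*}
F(z)=zf(z)=\frac{z}{\left(1-z_1^2\right)^{1-\alpha}},\qquad z\in\mathbb{U}^n .
\end{align*}
Here $l_u(z)=z_1$ with $u=e_1$, which is a legitimate choice for the sup norm. By \cite[Lemma 2.2]{Xu-He-Xu}, as in the Banach space case, this $F$ lies in $\mathcal{S}^*_{\alpha}(\mathbb{U}^n)$; alternatively one can check directly that $f$ depends only on $z_1$ and reduces to the one-variable extremal function of Theorem A. Expanding,
\begin{align*}
f(z)=1+(1-\alpha)z_1^2+\cdots,
\end{align*}
so by \eqref{Th-2x} we get $D^2F(0)(z^2)=0$, $D^4F(0)(z^4)=0$ and $\frac{D^3F(0)(z^3)}{3!}=(1-\alpha)z_1^2 z$.

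Next I would compute $\frac{1}{3!}D^3F(0)(z^2,w)$ by polarization. The symmetric trilinear form is $D^3F(0)(a,b,c)=2(1-\alpha)\big(a_1b_1c+a_1c_1b+b_1c_1a\big)$. This gives
\begin{align*}
\frac{1}{3!}D^3F(0)(z^2,w)=\frac{1-\alpha}{3}\left(z_1^2w+2z_1w_1z\right).
\end{align*}
Substituting $w=(1-\alpha)z_1^2z$, so that $w_1=(1-\alpha)z_1^3$, yields $(1-\alpha)^2z_1^4z$. The $D^4$ term vanishes because its argument $D^2F(0)(z^2)/2!$ is zero. Hence the left-hand side equals $(1-\alpha)^2|z_1|^4\|z\|_\infty$. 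Taking $z=(r,0,\dots,0)$ with $0<r<1$ gives $(1-\alpha)^2r^6=(1-\alpha)^2\|z\|_\infty^6$, so equality holds.

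The main point to verify carefully is that this $F$ really belongs to $\mathcal{S}^*_\alpha(\mathbb{U}^n)$ in the sense of \eqref{SL1}. I would check this directly: $[DF(z)]^{-1}F(z)=\frac{z(1-z_1^2)}{1+(1-2\alpha)z_1^2}$ is a scalar multiple of $z$, so with $l_z$ as in \eqref{eq3} the quantity in \eqref{SL1} becomes $\frac{1+(1-2\alpha)z_1^2}{1-z_1^2}$. Its real part exceeds $\alpha$ because $\zeta\mapsto\frac{1+(1-2\alpha)\zeta}{1-\zeta}$ maps $\mathbb{U}$ onto the half-plane $\Re>\alpha$.
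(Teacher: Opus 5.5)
Your route is the paper's own: specialize Theorem~\ref{Th-22} to $\mu=1$, then test $F(z)=z/(1-z_1^2)^{1-\alpha}$ at $z=(r,0,\ldots,0)$. Your intermediate computations are correct, and more careful than the paper's. Write $E(z)$ for the vector inside the norm. The polarization of $D^3F(0)$ is right, and so is the identity $E(z)=(1-\alpha)^2z_1^4z$. Your direct check that $F\in\mathcal{S}^*_\alpha(\mathbb{U}^n)$, via $\frac{1+(1-2\alpha)\zeta}{1-\zeta}=\alpha+(1-\alpha)\frac{1+\zeta}{1-\zeta}$, is also right; the paper merely asserts this membership. The failure is in your last line: at $z=(r,0,\ldots,0)$ one has $(1-\alpha)^2|z_1|^4\|z\|_\infty=(1-\alpha)^2r^5$, not $(1-\alpha)^2r^6$.

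This is not a harmless slip. $E$ is a homogeneous polynomial of degree $2+3=3+2=5$ in $z$; for $n=1$ it is just $(a_3^2-a_2a_4)z^5$. So $E(tz)=t^5E(z)$. A bound $\|E(z)\|_\infty\le C\|z\|_\infty^6$ on $\mathbb{U}^n$ would give $\|E(z)\|_\infty\le Ct\|z\|_\infty^6$ for every $t\in(0,1]$, hence $E\equiv 0$.

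Your extremal map therefore violates the displayed inequality, since $(1-\alpha)^2r^5>(1-\alpha)^2r^6$ for $0<r<1$. The same map violates Theorem~\ref{Th-22} at $\mu=1$, which you quote as a black box.

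The paper's argument has exactly the same defect. Its proof of Theorem~\ref{Th-22} bounds the components on $\partial_0\mathbb{U}^n$ (where $\|z\|_\infty=1$) and extends the bound by maximum modulus. It then writes $\|z\|_\infty^6$, whereas rescaling $E(z)=\|z\|_\infty^5E(z/\|z\|_\infty)$ gives $\|z\|_\infty^5$. Its ``one can easily verify'' sharpness check hides the same $r^5$ versus $r^6$ mismatch.

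With the exponent corrected, your argument proves $\|E(z)\|_\infty\le(1-\alpha)^2\|z\|_\infty^5$ for $z\in\mathbb{U}^n$. Equality holds for your $F$ at every $z=(r,0,\ldots,0)$, so this version is sharp. State and prove that version, deriving the inequality from the unit-boundary estimate plus homogeneity rather than from the exponent $6$ in Theorem~\ref{Th-22}.
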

 
 To establish the sharpness of the bound of Corollary \ref{Cr-22}, let us consider the function $F$ defined by $F(z)=zf(z)=\left(\frac{z_1}{(1-z_1^2)^{1-\alpha}},\frac{z_2}{(1-z_1^2)^{1-\alpha}},\ldots,\frac{z_n}{(1-z_1^2)^{1-\alpha}}\right)$.
 
 It is clear that $F\in\mathcal{S}^*_{\alpha}(\mathbb{U}^n)$. 
  Note that 
 \begin{align*}
 	F(z)=z\sum_{n=0}^{\infty}\frac{(1-\alpha)(2-\alpha)\ldots (n-\alpha)}{n!}z_1^{2n},\quad z\in\mathbb{U}^n.
 \end{align*}
 
 Clearly \eqref{Th-2x} yields
 \begin{align*}
 	\frac{D^2F(0)(z^2)}{2!}=0,\;\;
 	\frac{D^3F(0)(z^3)}{3!}=(1-\alpha)z_1^2 z\;\;\text{and}\;\;
 	\frac{D^4F(0)(z^4)}{4!}=0.
 \end{align*}

 Taking $z=(r,0,\ldots,0)$, where $0<r<1$; one can easily verify that 
 \begin{align*}
 \displaystyle
 \left\|\frac{1}{3!}D^3F(0)\left(z^2,\frac{D^3F(0)(z^3)}{3!}\right)-\frac{1}{4!}D^4F(0)\left(z^3,\frac{D^2F(0)(z^2)}{2!}\right)\right\|_{\infty}=
 (1-\alpha)^2||z||_{\infty}^6,
 \end{align*}
 which establishes the sharpness of the bound.

\begin{figure}[htbp]
	\centering
	\begin{tikzpicture}
		\begin{axis}[
			width=0.88\textwidth,
			height=0.55\textwidth,
			xmin=0,
			xmax=1,
			ymin=0,
			ymax=9.5,
			xlabel={$\alpha$},
			ylabel={$B(\alpha,\mu)=(1-\alpha)^2(9-8\mu)$},
			xtick={0,0.25,0.5,0.75,1},
			ytick={0,1,3,5,7,9},
			xticklabels={
				$0$,
				$\frac14$,
				$\frac12$,
				$\frac34$,
				$1$
			},
			minor x tick num=4,
			minor y tick num=1,
			xticklabel style={font=\small},
			yticklabel style={font=\small},
			xlabel style={font=\large},
			ylabel style={font=\large},
			grid=both,
			major grid style={
				line width=0.35pt,
				opacity=0.55
			},
			minor grid style={
				line width=0.2pt,
				opacity=0.35
			},
			axis line style={line width=0.9pt},
			tick style={line width=0.7pt},
			legend style={
				font=\small,
				draw=gray!45,
				fill=white,
				fill opacity=0.94,
				text opacity=1,
				at={(0.98,0.96)},
				anchor=north east,
				rounded corners=2pt,
				inner xsep=6pt,
				inner ysep=5pt,
				row sep=2pt
			},
			domain=0:1,
			samples=150
			]
			
			\addplot[
			very thick,
			smooth,
			blue!75!black
			]
			{9*(1-x)^2};
			\addlegendentry{$\mu=0\quad (9-8\mu=9)$}
			
			\addplot[
			very thick,
			smooth,
			red!75!black
			]
			{7*(1-x)^2};
			\addlegendentry{$\mu=\frac14\quad (9-8\mu=7)$}
			
			\addplot[
			very thick,
			smooth,
			orange!90!black
			]
			{5*(1-x)^2};
			\addlegendentry{$\mu=\frac12\quad (9-8\mu=5)$}
			
			\addplot[
			very thick,
			smooth,
			green!55!black
			]
			{3*(1-x)^2};
			\addlegendentry{$\mu=\frac34\quad (9-8\mu=3)$}
			
			\addplot[
			very thick,
			smooth,
			violet!75!black
			]
			{(1-x)^2};
			\addlegendentry{$\mu=1\quad (9-8\mu=1)$}
			
		\end{axis}
	\end{tikzpicture}
	
	\caption{The sharp bound $(1-\alpha)^2(9-8\mu)$ for $\mu=0,\frac14,\frac12,\frac34,1$ at $\alpha=0$. The bound reduces to $9-8\mu$, yielding the respective values $9,7,5,3,1$. The figure illustrates the monotone decrease as $\mu$ increases, with the minimum value $1$ attained at $\mu=1$.}
	\label{fig:bound_comparison}
\end{figure}
 \begin{rem} In the special case where $X = \mathbb{C}$ and $\mathbb{B} = \mathbb{U}$, Theorems \emph{\ref{Th-21}}--\emph{\ref{Th-22}} reduce to Theorem A.
 \end{rem}

 \subsection{{Auxiliary lemma}}
 Let $\mathcal{P}$ denote the class of analytic functions $h$ in the open unit disk $\mathbb{U}$ satisfying $h(0) = 1$ and $\Re(h(z)) > 0$ for all $z \in \mathbb{U}$. Each $h\in\mathcal{P}$ is called a Carath\'eodory function. Coefficient bounds for functions in $\mathcal{P}$ play a fundamental role in solving various coefficient problems. Let $h(z)=1+\sum_{n=1}^{\infty}h_nz^n\in\mathcal{P}$. Then we know that
 $|h_n|\leq 2$ and $|h_n-h_mh_{n-m}|\leq 2,\quad n\geq 2,\;1\leq m\leq n-1$ (see \cite{Cho-Kumar-Ravichandran-2019}).
 
 \begin{lem}\label{lm1}\emph{\cite{Libera_1,Libera_2}} If $h\in\mathcal{P}$ is of the form $h(z)=1+\sum_{n=1}^{\infty}h_nz^n$ with $h_1\geq 0$, then
 \[
 h_2=h_1^2+x(4-h_1^2)
 \]
 and
 \[
 4h_3=h_1^3+2h_1(4-h_1^2)x-h_1(4-h_1^2)x^2+2(4-h_1^2)(1-|x|^2)\eta
 \]
 for some $x$ and $\eta$ such that $|x|\leq 1$ and $|\eta|\leq 1$.
 \end{lem}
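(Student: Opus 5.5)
The plan is to pass from $h$ to a Schwarz function and apply the Schur algorithm twice. Put $w=(h-1)/(h+1)$. Since $\Re h>0$ and $h(0)=1$, $w$ is holomorphic in $\mathbb{U}$ with $w(0)=0$ and $|w|<1$. Write $w(z)=c_1z+c_2z^2+c_3z^3+\cdots$. Expanding $h=(1+w)/(1-w)=1+2w+2w^2+2w^3+\cdots$ gives
\begin{align*}
h_1=2c_1,\qquad h_2=2(c_2+c_1^2),\qquad h_3=2(c_3+2c_1c_2+c_1^3).
\end{align*}
The hypothesis $h_1\ge 0$ makes $c_1=h_1/2\in[0,1]$ real.

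Next I would parametrize $c_2,c_3$ by the Schur algorithm. Schwarz's lemma gives $|c_1|\le 1$. If $|c_1|<1$, the function
\begin{align*}
w_1(z)=\frac{w(z)/z-c_1}{1-\overline{c_1}\,w(z)/z}
\end{align*}
is again a self-map of $\mathbb{U}$, with $w_1(0)=0$. Write $w_1(z)=xz+x'z^2+\cdots$ with $|x|\le1$. Applying Schwarz's lemma once more, this time to $w_1(z)/z$ in the same way, gives $x'=(1-|x|^2)\eta$ with $|\eta|\le 1$. Inverting the Möbius relation and comparing coefficients should yield
\begin{align*}
c_2=(1-|c_1|^2)x,\qquad c_3=(1-|c_1|^2)(1-|x|^2)\eta-(1-|c_1|^2)\,\overline{c_1}\,x^2.
\end{align*}
The degenerate cases are easy. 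If $|c_1|=1$, then $w(z)=c_1z$ and $c_2=c_3=0$, so any $x,\eta$ work. If $|x|=1$, then $w_1(z)=xz$ and $x'=0$, so any $\eta$ works.

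Finally I would substitute $c_1=h_1/2$ and $1-c_1^2=(4-h_1^2)/4$. For $h_2$ this gives $h_2=\tfrac12h_1^2+\tfrac12 x(4-h_1^2)$, that is, $2h_2=h_1^2+x(4-h_1^2)$. This is the Libera--Zlotkiewicz form. The displayed formula in the lemma is missing this factor $2$, and I would read the first identity with it. For $h_3$,
\begin{align*}
4h_3=8c_3+16c_1c_2+8c_1^3=h_1^3+2h_1(4-h_1^2)x-h_1(4-h_1^2)x^2+2(4-h_1^2)(1-|x|^2)\eta,
\end{align*}
which is exactly the stated identity. The only real obstacle is the second-order coefficient comparison in the inverted Schur step, the term $-\overline{c_1}x^2$. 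Here I would expand $w(z)/z=(c_1+w_1)/(1+\overline{c_1}w_1)$ to second order in $z$ and check carefully that the reality of $c_1$ turns $\overline{c_1}$ into $h_1/2$.
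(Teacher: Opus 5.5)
Your proof is correct, and so is your correction of the statement. As printed, the first identity fails for $h(z)=(1+z)/(1-z)$: there $h_1=h_2=2$, while the right-hand side equals $4$. The correct form is $2h_2=h_1^2+x(4-h_1^2)$. This is also the form the paper actually uses afterwards: the factor $\frac{1}{48}$ and the coefficient $(4\mu-3)h_1^2+12$ of $x^2$ in \eqref{Th-21.12} only come out with the factor $2$.

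The paper gives no proof of its own and refers to Libera--Zlotkiewicz, whose argument rests on the Carath\'eodory--Toeplitz criterion. There, the Toeplitz determinants $D_2,D_3$ built from $2,h_1,h_2,h_3$ are nonnegative. For real $h_1$ the condition $D_2\ge 0$ is exactly $|2h_2-h_1^2|\le 4-h_1^2$, and $D_3\ge 0$, a $4\times 4$ determinant, is then solved for $h_3$.

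You instead run two Schur steps on $w=(h-1)/(h+1)$. This avoids the determinant algebra and shows that the hypothesis $h_1\ge 0$ is used only to make $c_1$ real. It also identifies the extremal cases $|x|=1$ and $|\eta|=1$ as finite Blaschke products.

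Your argument is the same mechanism that underlies Lemma~\ref{Lm3}, which the paper quotes later. Putting $u_1=h_1/2$, $u_2=x$, $u_3=\eta$ in \eqref{e2.3}--\eqref{e2.4} gives exactly your two formulas. So the lemma, with the factor $2$ restored, is a special case of Lemma~\ref{Lm3}.
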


\begin{proof}[\bf Proof of Theorem \ref{Th-21}]
By the given condition, we have $F\in \mathcal{S}^*_{\alpha}(\mathbb{B})$, where $0\leq \alpha<1$. Then \eqref{SL1} yields
\begin{align*}
\Re\left\lbrace \frac{\|z\|}{l_z\left([D F(z)]^{-1}F(z)\right)}\right\rbrace>\alpha, \quad z\in\mathbb{B}\backslash \{0\},\quad l_z\in T(z).
\end{align*}

Let $z\in \mathbb{B}\backslash\{0\}$ be fixed and let $z_0=z/\|z\|$. The function $g:\mathbb{U}\to \mathbb{C}$ is defined as follows
\begin{align}\label{Th-21.1}	
g(\zeta)=\begin{cases}
\frac{\zeta}{l_{z_0}\left([D F(\zeta z_0)]^{-1}F(\zeta z_0)\right)},& \zeta\neq 0,\\[8pt]
1& \zeta=0.
\end{cases}
\end{align}
Clearly $g\in\mathcal{H}(\mathbb{U})$ and $g(0)=1$.
Since $l_{\zeta z}(.)=\frac{|\zeta|}{\zeta}l_z$, for each $\zeta\in\mathbb{C}\backslash\{0\}$, we have
\begin{align*}
g(\zeta)=\frac{\zeta}{l_{z_0}\left([D F(\zeta z_0)]^{-1}F(\zeta z_0)\right)}=\frac{\|\zeta z_0\|}{l_{\zeta z_0}\left([D F(\zeta z_0)]^{-1}F(\zeta z_0)\right)}
\end{align*}	
for $\zeta\in\mathbb{U}\backslash\{0\}$ and so  $\Re\{g(\zeta)\}>\alpha$
for all $\zeta\in\mathbb{U}$. 
\par Let $h(\zeta)=\frac{g(\zeta)-\alpha}{1-\alpha}$ for all $\zeta\in\mathbb{U}$. Clearly $h\in\mathcal{H}(\mathbb{U})$ with $\Re(h(\zeta))>0$ for all $\zeta\in\mathbb{U}$ and $h(0)=1$. Also let
$h(\zeta)=1+h_1\zeta+h_2\zeta^2+h_3\zeta^3+\ldots$. Then it follows that,
\begin{align}\label{Th-21.2}
g(\zeta)=1+(1-\alpha)h_1\zeta+(1-\alpha)h_2\zeta^2+(1-\alpha)h_3\zeta^3+\ldots
\end{align}	

Using a similar method as in Theorem 7.1.14 \cite{Graham-Kohr}, we have
\begin{align}\label{Th-21.2b}
[D F(z)]^{-1}\eta=\frac{1}{f(z)}\left\lbrack \eta-\frac{\frac{zDf(z)}{f(z)}}{1+\frac{Df(z)z \eta}{f(z)}}\right\rbrack,\quad \eta\in X.
\end{align}

Therefore 
\begin{align}\label{Th-21.3}
	[D F(z)]^{-1}F(z)=\frac{zf(z)}{f(z)+D f(z)z},
\end{align}
which implies that $\frac{\|z\|}{l_z\left([D F(z)]^{-1}F(z)\right)}=1+\frac{Df(z) z}{f(z)}$
and so
\begin{align}\label{Th-21.4}
g(\zeta)=\frac{\|\zeta z_0\|}{l_{\zeta z_0}\left([D F(\zeta z_0)]^{-1}F(\zeta z_0)\right)}=1+\frac{Df(\zeta z_0) \zeta z_0}{f(\zeta z_0)}.
\end{align}

Let
\begin{align}\label{Th-21.5}
f(\zeta z_0)=1+D f(0)(z_0)\zeta+\frac{1}{2!}D^2 f(0)(z_0^2)\zeta^2+\frac{1}{3!}D^3 f(0)(z_0^3)\zeta^3+\ldots.
\end{align}

Clearly
\begin{align}\label{Th-21.6}
D f(\zeta z_0)(\zeta z_0)=D f(0)(z_0)\zeta+D^2 f(0)(z_0^2)\zeta^2+\frac{1}{2}D^3 f(0)(z_0^3)\zeta^3+\ldots
\end{align}

Now using \eqref{Th-21.5} and \eqref{Th-21.6} to \eqref{Th-21.4}, we get
\begin{align}\label{Th-21.7}
&\left(1+(1-\alpha)h_1\zeta+(1-\alpha)h_2\zeta^2+(1-\alpha)h_3\zeta^3+\ldots\right)\times\\&\left(1+D f(0)(z_0)\zeta+\frac{1}{2!}D^2 f(0)(z_0^2)\zeta^2+\frac{1}{3!}D^3 f(0)(z_0^3)\zeta^3+\ldots\right)\nonumber\\=&
1+2Df(0)(z_0)\zeta+\frac{3}{2}D^2f(0)(z_0^2)\zeta^2+\frac{2}{3}D^3f(0)(z_0^3)\zeta^3+\ldots.\nonumber
\end{align}

Clearly from \eqref{Th-21.7}, we find that
\begin{align}\label{Th-21.8}
\begin{cases}
Df(0)(z_0)&=(1-\alpha)h_1,\\[8pt]
D^2f(0)(z_0^2)&=(1-\alpha)h_2+((1-\alpha)h_1)^2,\\[8pt]
D^3f(0)(z_0^3)&=\left((1-\alpha)h_1\right)^3+3(1-\alpha)^2h_1h_2+2(1-\alpha)h_3.
\end{cases}
\end{align}

Using \eqref{Th-2y} to \eqref{Th-21.8}, we get
\begin{align}\label{Th-21.9}
&\frac{l_{z}(D^4F(0)(z^4))}{4!\|z\|^4}=\frac{1}{6}\left(\left((1-\alpha)h_1\right)^3+3(1-\alpha)^2h_1h_2+2(1-\alpha)h_3\right),
\end{align}
\begin{align}\label{Th-21.10}
\frac{l_{z}(D^3F(0)(z^3))}{3!\|z\|^3}=\frac{1}{2}\left((1-\alpha)h_2+((1-\alpha)h_1)^2\right)
\end{align}
 and
 \begin{align}\label{Th-21.11}
 \frac{l_z(D^2F(0)(z^2))}{2!\|z\|^2}=(1-\alpha)h_1.
\end{align}

Now using \eqref{Th-21.9}-\eqref{Th-21.11}, we find that 
\begin{align*}
	&\left|\mu\frac{l_{z}(D^2F(0)(z^2))}{2!\|z\|^2}\frac{l_{z}(D^4F(0)(z^4))}{4!\|z\|^4}-\left(\frac{l_z(D^3F(0)(z^3))}{3!\|z\|^3}\right)^2\right|\\=&
	\frac{(1-\alpha)^2}{12}\left|4\mu h_1h_3-3h_2^2+6(\mu-1)(1-\alpha)h_1^2h_2+(2\mu-3)(1-\alpha)^2h_1^4\right|
\end{align*}
and so using Lemma \ref{lm1}, we get
\begin{align}\label{Th-21.12}
	&\left|\mu\frac{l_{z}(D^2F(0)(z^2))}{2!\|z\|^2}\frac{l_{z}(D^4F(0)(z^4))}{4!\|z\|^4}-\left(\frac{l_z(D^3F(0)(z^3))}{3!\|z\|^3}\right)^2\right|\\\le&\frac{(1-\alpha)^2}{48}\Biggl|\bigl((4\mu-3)+12(\mu-1)(1-\alpha)+4(2\mu-3)(1-\alpha)^2\bigr)(h_1)^4\nonumber\\&+(4-(h_1)^2)\left\lbrace (8\mu-6+12(\mu-1)(1-\alpha))h_1^2x-((4\mu-3)h_1^2+12)x^2\nonumber\right.\\&\left.
	+8\mu h_1(1-|x|^2)\eta\right\rbrace\Biggr|\nonumber
\end{align}
for some complex numbers \(x\) and \(\eta\) satisfying $|x|\le1$ and $|\eta |\le 1$.
Since the class $\mathcal{P}$ is invariant under rotations, we may assume, $h_1 =t\in[0, 2]$, $x_1 := |x|$ and $y := |\eta|$.

Clearly from (\ref{Th-21.12}), we get
\begin{align}\label{Th-21.13}
&\left|\mu\frac{l_{z}(D^2F(0)(z^2))}{2!\|z\|^2}\frac{l_{z}(D^4F(0)(z^4))}{4!\|z\|^4}-\left(\frac{l_z(D^3F(0)(z^3))}{3!\|z\|^3}\right)^2\right|\\&\le\frac{(1-\alpha)^2}{48}\biggl|\tilde{F}(t,x_1,y)\biggr|,\nonumber
\end{align}
where 
\begin{align}
&\tilde{F}(t,x_1,y):\\=&|A|t^4+(4-t^2)\left((8\mu-6+12(|B|t^2x_1+|(4\mu-3)t^2+12|x_1^2
+8\mu t(1-x_1^2)y\right).\nonumber
\end{align}
with $A=(4\mu-3)+12(\mu-1)(1-\alpha)+4(2\mu-3)(1-\alpha)^2$ and $B=8\mu-6+12(\mu-1)(1-\alpha)$.
But we can deduce that
\begin{align}\label{Th.21.14}
	\tilde{F}(t,x_1,y)&\le \tilde{F}(t,x_1,1)\\&=|A|t^4+(4-t^2)\left(|B|t^2x_1+|(4\mu-3)t^2+12|x_1^2
	+8\mu t(1-x_1^2)\right)\nonumber\\&:=M(t,x_1).\nonumber
\end{align}
Note that
\begin{align*}
	|A|\le |24\mu-27|=27-24\mu, \qquad \mu\in[0,1]
\end{align*}
and
\begin{align}\label{S1}
	|B|=\begin{cases}
	\displaystyle
	18-20\mu,& 0\le\mu\le \frac{9}{10},\\[2ex]
	\displaystyle
	20\mu-18,& \frac{9}{10}<\mu\le1.
\end{cases}
\end{align}

Also $(4\mu-3)t^2+12\geq 0$ for every $t\in [0,2]$.
Now from (\ref{Th.21.14}), we have 
\begin{align*}
	M(t,x_1)=(27-24\mu)t^4+(4-t^2)\left(|B|t^2x_1+((4\mu-3)t^2+12)x_1^2
	+8\mu t(1-x_1^2)\right),
\end{align*}
and so
\begin{align*}
	\frac{\partial M}{\partial x_1}= (4-t^2)\biggl(|B|t^2+2((4\mu-3)t^2+12-8\mu t)\biggr)x_1.
\end{align*}
Since $|B|>0$ and $(4\mu-3)t^2+12-8\mu t>0$, $x_1=1$ gives the maximum of $M(t,x_1)$.
Therefore
\begin{align}\label{s2}
	M(t,1)=(27-24\mu)t^4+(4-t^2)((|B|+4\mu-3)t^2+12).
\end{align}

If $0\le\mu\le\frac{9}{10}$, then from \eqref{S1} and \eqref{s2} we have
\begin{align*}
M(t,1)=(12-8\mu)t^4+16(3-4\mu)t^2+48.
\end{align*}
Also this can be rewritten as $M(t^2,1):=M_1(u)=(12-8\mu)u^2+16(3-4\mu)u+48$,
where $u=t^2$. Clearly $12-8\mu>0$ for any $\mu\in[0,1]$. Therefore, the quadratic function is convex and so the maximum occurs at the end point. Now $M_1(0)=48$ and $M_1(4)= 432-384\mu$. Hence, from \eqref{s2} we obtain \begin{align*}
	M(t,1)\le 48(9-8\mu).
\end{align*}

If $\frac{9}{10}< \mu\le 1$, then from \eqref{S1} and \eqref{s2} we obtain
\begin{align*}
M(t,1)=48\left((1-\mu)t^4-2(1-\mu)t^2+1\right).
\end{align*}
Since $t\in[0, 2]$. Now it is easy to deduce that 
  \begin{align*}
  	M(t,1)\le48\left(9-8\mu\right).
  \end{align*}
Therefore, collecting all the scenarios, from \eqref{Th.21.14} we obtain the desired bound 
\begin{align*}
\tilde{F}(t,x_1,y)\le 48\left(9-8\mu\right).
\end{align*}

Hence, from \eqref{Th-21.13} we get 
\begin{align*}
	\left|\mu\frac{l_{z}(D^2F(0)(z^2))}{2!\|z\|^2}\frac{l_{z}(D^4F(0)(z^4))}{4!\|z\|^4}-\left(\frac{l_z(D^3F(0)(z^3))}{3!\|z\|^3}\right)^2\right|\le{(1-\alpha)^2}(9-8\mu).
\end{align*}
\end{proof}

\begin{proof}[\bf Proof of Theorem \ref{Th-22}]
By the given condition, $F\in\mathcal{S}^*_{\alpha}(\mathbb{U}^n)$, where $0\leq \alpha<1$. Therefore
\begin{align}\label{Th-22.1}
\displaystyle
\Re\left\lbrace \frac{\|z\|_{\infty}}{l_z\left([D F(z)]^{-1}F(z)\right)}\right\rbrace>\alpha, \quad z\in\mathbb{U}^n\backslash \{0\},\quad l_z\in T(z).
\end{align}
	
Let $w(z)=(w_1(z),w_2(z),\ldots,w_n(z))'=[D F(z)]^{-1}F(z)$ be a column vector in $\mathbb{C}^n$. According to (\ref{eq3}), we have
\begin{align}\label{Th-22.2}
\displaystyle
l_z(w)=\sum\limits_{\{j:||z||_{\infty}=|z_j|\}}t_j\frac{w_j(z)\ol z_j}{||z||_{\infty}},
\end{align}
where each $t_j\geq 0$ and $\sum\limits_{\{j:||z||_{\infty}=|z_j|\}}t_j=1.$
\par Let $k\in\{1,2,\ldots,n\}$ be such that $||z||_{\infty}=|z_k|$. Suppose $t_k=1$ and $t_j=0$, where $j\in\{1,2,\ldots,n\}$ such that $j\neq k$. Then from \eqref{Th-22.1} and \eqref{Th-22.2}, we have
\begin{align}\label{Th-22.3}
\displaystyle
\Re\left\lbrace \frac{z_k}{w_k(z)}\right\rbrace>\alpha,\quad z\in\mathbb{U}^n\backslash\{0\},
\end{align}
where $w_k(z)$ is the $k$-th coordinate of $w(z)$ such that $||z||_{\infty}=|z_k|$.
	
Let $z\in\mathbb{U}^n\backslash\{0\}$ and let $z_0=z/\|z\|$. We define the function $g_k:\mathbb{U}\to \mathbb{C}$ as follows
\begin{align}\label{Th-22.4}	
\displaystyle
g_k(\zeta)=\begin{cases}
\frac{\zeta z_k}{w_k(\zeta z_0)},& \zeta\neq 0,\\[8pt]
1& \zeta=0.
\end{cases}
\end{align}
	
Now it follows from \eqref{Th-22.3} and \eqref{Th-22.4} that $\Re\{g_k(\zeta)\}>0$ for all $\zeta\in\mathbb{U}$. 
	
Let
\begin{align}\label{Th-22.5}
\displaystyle
h_k(\zeta)=\frac{g_k(\zeta)-\alpha}{1-\alpha}
\end{align}
for all $\zeta\in\mathbb{U}$. Since $\Re\{g_k(\zeta)\}>0$ 
for all $\zeta\in\mathbb{U}$, we deduce from \eqref{Th-22.5} that $h_k\in\mathcal{H}(\mathbb{U})$ with $\Re\{h_k(\zeta)\}>0$ for all $\zeta\in\mathbb{U}$ and $h_k(0)=1$.

On the other hand, from \eqref{Th-21.3}, we have
\begin{align}\label{Th-22.6}
\displaystyle
w_k(\zeta z_0)=\zeta z_k\left\lbrack \frac{f(\zeta z_0)}{f(\zeta z_0)+D f(\zeta z_0)\zeta z_0}\right\rbrack.
\end{align}
	
Using \eqref{Th-22.4} and \eqref{Th-22.6} to \eqref{Th-22.5}, we get
\begin{align}\label{Th-22.7}
\displaystyle
\left((1-\alpha)h_k(z)+\alpha\right)f(\zeta z_0)=f(\zeta z_0)+D f(\zeta z_0)\zeta z_0.	
\end{align} 	
	
Let
\begin{align}\label{Th-22.7a}
h_k(\zeta)=1+h_{k1}\zeta+h_{k2}\zeta^2+h_{k3}\zeta^3+\ldots
\end{align}
	
Now using \eqref{Th-21.5}, \eqref{Th-21.6} and \eqref{Th-22.7a} to \eqref{Th-22.7} and then comparing coefficients, we obtain
\begin{align}\label{Th-22.8}
\begin{cases}
Df(0)(z_0)&=(1-\alpha)h_{k1},\\[8pt]
D^2f(0)(z_0^2)&=(1-\alpha)h_{k2}+((1-\alpha)h_{k1})^2,\\[8pt]
D^3f(0)(z_0^3)&=\left((1-\alpha)h_{k1}\right)^3+3(1-\alpha)^2h_{k1}h_{k2}+2(1-\alpha)h_{k3}.
\end{cases}
\end{align}

Let $F(z)=(F_1(z),F_2(z),\ldots,F_n(z))$. Since $F(z)=zf(z)$, it follows that
\begin{align*}
	\displaystyle
	\frac{D^mF_k(0)(z^m)}{m!}=\frac{D^{m-1}f(0)(z^{m-1})}{(m-1)!}z_k.
\end{align*}

Since $z_0=z/\|z\|$, we have 
\begin{align}\label{Th-22.11}
	\displaystyle
	\frac{D^mF_k(0)(z_0^m)}{m!}=\frac{D^{m-1}f(0)(z_0^{m-1})}{(m-1)!}\frac{z_k}{||z||_{\infty}}.
\end{align}

In view of \eqref{Th-2x} and \eqref{Th-22.11}, we obtain
\begin{align}\label{Th-22.12}
\displaystyle
&\left|\frac{1}{3!}D^3F_k(0)\left(z_0^2,\frac{D^3F(0)(z_0^3)}{3!}\right)\frac{||z||_{\infty}}{z_k}-\mu\frac{1}{4!}D^4F_k(0)\left(z_0^3,\frac{D^2F(0)(z_0^2)}{2!}\right)\frac{||z||_{\infty}}{z_k}\right|\nonumber\\=&
\left|\frac{1}{3!}D^3F_k(0)\left(z_0^2,\frac{D^2f(0)(z_0^2)z_0}{2!}\right)\frac{||z||_{\infty}}{z_k}-\mu \frac{1}{4!}D^4F_k(0)\left(z_0^3,Df(0)(z_0)z_0\right)\frac{||z||_{\infty}}{z_k}\right|\nonumber\\=&
\left|\frac{1}{2!} D^2f(0)(z_0^2)\frac{1}{3!}D^3F_k(0)\left(z_0^2,z_0\right)\frac{||z||_{\infty}}{z_k}-\mu Df(0)(z_0)\frac{1}{4!}D^4F_k(0)(z_0^4)\frac{||z||_{\infty}}{z_k}\right|
\nonumber\\=&
\left| \frac{1}{2!}D^2f(0)(z_0^2)\frac{1}{3!}D^3F_k(0)\left(z_0^3\right)\frac{||z||_{\infty}}{z_k}-\mu Df(0)(z_0)\frac{D^3f(0)(z_0^3)}{3!}\right|
\nonumber\\=&
\left|\left(\frac{D^2f(0)(z_0^2)}{2!}\right)^2-\mu Df(0)(z_0)\frac{D^3f(0)(z_0^3)}{3!}\right|.
\end{align}

Using \eqref{Th-21.8} to \eqref{Th-22.12}, we obtain
\begin{align*}
\displaystyle
&\left|\frac{1}{3!}D^3F_k(0)\left(z_0^2,\frac{D^3F(0)(z_0^3)}{3!}\right)\frac{||z||_{\infty}}{z_k}-\mu\frac{1}{4!}D^4F_k(0)\left(z_0^3,\frac{D^2F(0)(z_0^2)}{2!}\right)\frac{||z||_{\infty}}{z_k}\right|\\=&
\frac{(1-\alpha)^2}{12}\left|4\mu h_{k1}h_{k3}-3h_{k2}^2+6(\mu-1)(1-\alpha)h_{k1}^2h_{k2}+(2\mu-3)(1-\alpha)^2h_{k1}^4\right|.	
	\nonumber
\end{align*}

Using similar arguments as in the proof of Theorem \ref{Th-21}, we have
\begin{align*}
\displaystyle
&\left|\frac{1}{3!}D^3F_k(0)\left(z_0^2,\frac{D^3F(0)(z_0^3)}{3!}\right)\frac{||z||_{\infty}}{z_k}-\mu\frac{1}{4!}D^4F_k(0)\left(z_0^3,\frac{D^2F(0)(z_0^2)}{2!}\right)\frac{||z||_{\infty}}{z_k}\right|\\&\le{(1-\alpha)^2}(9-8\mu).
\nonumber
\end{align*}
In particular, when $z_0\in\partial_0 \mathbb{U}^n$, we deduce that
\begin{align}\label{Th-22.13}
\displaystyle
&\left|\frac{1}{3!}D^3F_k(0)\left(z_0^2,\frac{D^3F(0)(z_0^3)}{3!}\right)-\mu\frac{1}{4!}D^4F_k(0)\left(z_0^3,\frac{D^2F(0)(z_0^2)}{2!}\right)\right|\\\leq& 
{(1-\alpha)^2}(9-8\mu),	\quad k=1,2,\ldots,n.\nonumber
\end{align}

We know that
\begin{align*}
\frac{1}{3!}D^3F_k(0)\left(z^2,\frac{D^3F(0)(z^3)}{3!}\right)-\mu\frac{1}{4!}D^4F_k(0)\left(z^3,\frac{D^2F(0)(z^2)}{2!}\right)\in\mathcal{H}(\ol{\mathbb{U}^n}),
\end{align*}
$k=1,2,\ldots,n$.
By the maximum modulus theorem for holomorphic functions on the unit polydisk, we deduce from \eqref{Th-22.13} that
\begin{align*}
\displaystyle
\left|\frac{1}{3!}D^3F_k(0)\left(z_0^2,\frac{D^3F(0)(z_0^3)}{3!}\right)-\mu\frac{1}{4!}D^4F_k(0)\left(z_0^3,\frac{D^2F(0)(z_0^2)}{2!}\right)\right|\leq
{(1-\alpha)^2}(9-8\mu)
\end{align*}
for all $z_0\in\partial \mathbb{U}^n$, where $k=1,2,\ldots,n$.
Hence
\begin{align*}
\displaystyle
\left|\frac{1}{3!}D^3F_k(0)\left(z^2,\frac{D^3F(0)(z^3)}{3!}\right)-\mu\frac{1}{4!}D^4F_k(0)\left(z^3,\frac{D^2F(0)(z^2)}{2!}\right)\right|\leq {(1-\alpha)^2}(9-8\mu)
\end{align*}
for all $z\in\mathbb{U}^n$, where $k=1,2,\ldots,n$. Therefore
\begin{align*}
\displaystyle
&\left\|\frac{1}{3!}D^3F(0)\left(z^2,\frac{D^3F(0)(z^3)}{3!}\right)-\mu\frac{1}{4!}D^4F(0)\left(z^3,\frac{D^2F(0)(z^2)}{2!}\right)\right\|_{\infty}\\\leq&
{(1-\alpha)^2}(9-8\mu)||z||_{\infty}^6, \quad z\in\mathbb{U}^n.
\end{align*}
\end{proof}

\section{\bf Bounds for $H_{2,2}^{\mu}(f)$ over the class of quasi-convex mappings of type $B$ in complex Banach spaces}\label{Sec-4}
Let $X$ and $Y$ be two complex Banach spaces and $\Omega\subset X$ be a domain.
Also let $f:\Omega\rightarrow Y$ be a holomorphic mapping. We say that $f$ is
\emph{convex} on $\Omega$ if $f$ is biholomorphic on $\Omega$ and
$f(\Omega)$ is a convex domain in $Y$.\vspace{1.2mm}

A normalized locally biholomorphic mapping $f\in\mathcal{H}(\mathbb{B})$ is said to be quasi-convex of type $B$ if (see \cite[Definition 2.6]{Ham2})
\begin{align}\label{C1}
	\displaystyle
	\Re\left\lbrace l_z\left([Df(z)]^{-1}\left(D^2f(z)(z,z)+Df(z)z\right)\right)\right\rbrace>0,\quad z\in\mathbb{B}.
\end{align}

If $\mathbb{B}$ is the unit ball of $X$, we let $\mathcal{K}_B(\mathbb{B})$ denote the subset of $\mathcal{S}(\mathbb{B})$
consisting of normalized quasi-convex mappings of type $B$ from $\mathbb{B}$ into $X$.\vspace{1.2mm}

Convex mappings in higher dimensions have been extensively studied in recent literature. Comprehensive overviews can be found in the monographs by Gong \cite{Gong} and Kohr \& Liczberski  \cite{Kohr-Liczberski-1998}. For foundational work on this topic, see \cite{Ham4},
\cite{Rop-Su}-\cite{Su3}, \cite{Xu-Dl-Li}, \cite{Xu-Yang-Liu-Xu-2015}, \cite{Zhang-Liu-2002}.
\vspace{1.2mm}

In this section, we extend Theorem B to the case of complex Banach space.
\begin{theo}\label{Th-31}Let $f\in\mathcal{H}(\mathbb{B},\mathbb{C})$ with $f(0)=1$ and $F(z)=zf(z)\in\mathcal{K}_B(\mathbb{B})$ and let $\mu\in [0,1]$. Then
\begin{align*}
\left|\mu\frac{l_{z}(D^2F(0)(z^2))}{2!\|z\|^2}\frac{l_{z}(D^4F(0)(z^4))}{4!\|z\|^4}-\left(\frac{l_z(D^3F(0)(z^3))}{3!\|z\|^3}\right)^2\right|\le\max\left\{\frac{1}{9}, \frac{\mu}{8}, 1-\mu\right\}.
\end{align*}
\end{theo}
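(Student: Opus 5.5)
The plan is to reduce Theorem \ref{Th-31} to a one-variable coefficient problem for a convex function, following the scheme of the proof of Theorem \ref{Th-21}, and then bound the resulting functional using Lemma \ref{lm1}.

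\emph{Step 1: the holomorphic expression.} Fix $z\in\mathbb{B}\backslash\{0\}$, put $z_0=z/\|z\|$, and compute the expression in \eqref{C1} for $F(z)=zf(z)$. Since $DF(z)z=z\,(f(z)+Df(z)z)$ and $D^2F(z)(z,z)=2(Df(z)z)\,z+z\,D^2f(z)(z,z)$, we get
\begin{align*}
D^2F(z)(z,z)+DF(z)z=z\bigl(f(z)+3Df(z)z+D^2f(z)(z,z)\bigr).
\end{align*}
Applying \eqref{Th-21.2b} to a vector of the form $z\varphi$ gives $[DF(z)]^{-1}(z\varphi)=z\varphi/(f(z)+Df(z)z)$. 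Hence
\begin{align*}
\frac{l_z\bigl([DF(z)]^{-1}(D^2F(z)(z,z)+DF(z)z)\bigr)}{\|z\|}=\frac{f(z)+3Df(z)z+D^2f(z)(z,z)}{f(z)+Df(z)z}.
\end{align*}

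\emph{Step 2: reduction to a convex function.} Set $\phi(\zeta)=\zeta f(\zeta z_0)$ for $\zeta\in\mathbb{U}$. The right-hand side above, evaluated at $\zeta z_0$, is exactly $1+\zeta\phi''(\zeta)/\phi'(\zeta)$. Using $l_{\zeta z_0}=\frac{|\zeta|}{\zeta}l_{z_0}$ as in the proof of Theorem \ref{Th-21}, condition \eqref{C1} gives $\Re\{1+\zeta\phi''/\phi'\}>0$, so $\phi\in\mathcal{K}(\mathbb{U})$. Write $\phi(\zeta)=\zeta+\sum_{m\ge2}b_m\zeta^m$, where $b_m=D^{m-1}f(0)(z_0^{m-1})/(m-1)!$. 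By \eqref{Th-2y}, $b_m=l_z(D^mF(0)(z^m))/(m!\|z\|^m)$, so the left side of the theorem equals $|\mu b_2b_4-b_3^2|$.

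\emph{Step 3: expression in Carath\'eodory coefficients.} Write $1+\zeta\phi''/\phi'=h(\zeta)=1+h_1\zeta+h_2\zeta^2+\cdots\in\mathcal{P}$. Comparing coefficients gives
\begin{align*}
b_2=\frac{h_1}{2},\qquad b_3=\frac{h_2+h_1^2}{6},\qquad b_4=\frac{2h_3+3h_1h_2+h_1^3}{24}.
\end{align*}
Therefore
\begin{align*}
\mu b_2b_4-b_3^2=\frac{1}{144}\Bigl(6\mu h_1h_3+(9\mu-12)h_1^2h_2+(3\mu-4)h_1^4-4h_2^2\Bigr).
\end{align*}
By rotation invariance we may take $h_1=t\in[0,2]$. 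Substituting Lemma \ref{lm1} for $h_2$ and $h_3$ in terms of $t$, $x$, $\eta$ and applying the triangle inequality, we bound the modulus by a function $\Phi(t,x_1,y)$ with $x_1=|x|\le1$ and $y=|\eta|\le1$. This function is increasing in $y$, so $y=1$.

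\emph{Step 4: the maximization (main obstacle).} It remains to maximize $\Phi(t,x_1,1)$ over $[0,2]\times[0,1]$. Unlike in Theorem \ref{Th-21}, the maximum is not always at $x_1=1$, since the three candidate values come from different regions.
\begin{itemize}
\item At $t=0$ the functional is $-4h_2^2/144$ with $|h_2|\le2$, giving $1/9$.
\item At $t=2$ we have $h_1=h_2=h_3=2$, so $b_2=b_3=b_4=1$ and the value is $|\mu-1|=1-\mu$.
\item For $0<t<2$ the maximum over $x_1$ may be interior. There the quadratic in $x_1$ should be handled as in the proof of Theorem B, which should produce $\mu/8$, recovering $1/8$ at $\mu=1$.
\end{itemize}
Concretely, I would compute $\partial\Phi/\partial x_1$ and split according to the sign of the coefficient of $x_1^2$, which is $|(\cdot)t^2+\cdots|-6\mu t$. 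In each case, either take $x_1=1$, giving a polynomial in $u=t^2$ whose endpoint values are $1/9$ and $1-\mu$, or take the critical point $x_1^*$, giving a one-variable function whose maximum on the relevant $t$-interval is bounded by $\mu/8$. Together these cases yield the bound $\max\{1/9,\mu/8,1-\mu\}$. I expect the careful case analysis in $\mu$ and $t$ in this step to be the hardest part.
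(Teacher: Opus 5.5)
Your reduction in Steps 1--2 is correct and matches the paper's. It is arguably cleaner: the quotient is a scalar independent of $l_z$, so $\phi(\zeta)=\zeta f(\zeta z_0)$ is convex in $\mathbb{U}$, and the problem becomes exactly the one-variable one. Your $b_2,b_3,b_4$ also agree with \eqref{Th-31.12}--\eqref{Th-31.14}.

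The identity in Step 3 is wrong, however. Since $b_3^2=(h_2+h_1^2)^2/36$ contributes $-8h_1^2h_2/144$, the middle coefficient is $9\mu-8$, not $9\mu-12$ (compare \eqref{e3.17}). This is not cosmetic. With $9\mu-12$, at $h_1=h_2=h_3=2$ your formula gives $\mu-\frac{11}{9}$ instead of $\mu-1$, contradicting your own $t=2$ check. Any maximization built on it would produce values such as $\frac{11}{9}-\mu$, which exceed the claimed bound. If you substitute Lemma \ref{lm1} literally, note also that it is misprinted: the correct relation is $2h_2=t^2+(4-t^2)x$.

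The main gap is Step 4. It carries the actual content of the proof, yet it is only sketched, and the sketch predicts the wrong structure. With the corrected identity one gets
\[
288\,\Phi(t,x_1,1)=18(1-\mu)t^4+3|5\mu-4|(4-t^2)t^2x_1+(4-t^2)\bigl(8+(3\mu-2)t^2-6\mu t\bigr)x_1^2+6\mu(4-t^2)t,
\]
where $8+(3\mu-2)t^2-6\mu t=(2-t)\bigl(4+(2-3\mu)t\bigr)\ge 0$ on $[0,2]$. So $\Phi$ is nondecreasing in $x_1$, and the maximum is \emph{always} at $x_1=1$, exactly as in Theorem \ref{Th-21}. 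There is no interior critical point $x_1^*$ and no case split on the sign of the $x_1^2$-coefficient.

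The value $\mu/8$ therefore does not come from a Theorem B-type treatment in $x_1$. It comes from the interior critical point $t=1$ of the quartic $\Phi(t,1,1)$:
\begin{itemize}
\item For $4/5\le\mu\le 1$, $288\,\Phi(t,1,1)=(32-36\mu)t^4+(72\mu-64)t^2+32$. This is concave in $t^2$ for $\mu>8/9$, with maximum $\mu/8$ at $t=1$. So your claim that on the $x_1=1$ branch only the endpoint values $1/9$ and $1-\mu$ matter is false there.
\item For $\mu<4/5$, $288\,\Phi(t,1,1)=(8-6\mu)t^4+(32-48\mu)t^2+32$. This is convex in $t^2$, with maximum $1-\mu$ at $t=2$.
\end{itemize}
These are the paper's $P$ and $Q$ under $u_1^2=t^2/4$. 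The paper reaches them via Lemmas \ref{Lm3} and \ref{l2.5}. In the regime $AC\ge 0$, $|B|\ge 2(1-|C|)$, that lemma just returns $|A|+|B|+|C|$, which is your triangle-inequality bound at $x_1=1$. So once Step 4 is done this way, your route coincides with the paper's.
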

\begin{figure}[H]
	\centering
	\begin{tikzpicture}
		\begin{axis}[
			title={Comparison of bounds in Theorem \ref{Th-31}},
			xlabel={$\mu$},
			ylabel={Value},
			xmin=0,
			xmax=1,
			ymin=0,
			ymax=1.05,
			grid=both,
			grid style={line width=.1pt, draw=gray!30},
			major grid style={line width=.2pt, draw=gray!60},
			minor tick num=4,
			legend style={
				at={(0.97,0.97)},
				anchor=north east,
				font=\small\bfseries,
				draw=black,
				fill=white,
				fill opacity=0.9,
				rounded corners=3pt,
				inner sep=5pt,
				legend cell align={left}
			},
			axis line style={thick},
			width=0.8\textwidth,
			height=0.4\textwidth,
			samples=200,
			every axis plot post/.append style={thick},
			xtick={0, 0.2, 0.4, 0.6, 0.8, 0.8889, 1},
			xticklabels={0, 0.2, 0.4, 0.6, 0.8, $\frac{8}{9}$, 1},
			ytick={0, 0.2, 0.4, 0.6, 0.8, 1},
			yticklabels={0, 0.2, 0.4, 0.6, 0.8, 1},
			]
			
			\addplot[
			domain=0:1,
			draw=none,
			fill=blue!8,
			fill opacity=0.3,
			forget plot
			]
			{max(1/9, x/8, 1-x)} \closedcycle;
			
			\addplot[
			domain=0:1,
			color=blue!90!black,
			line width=3.5pt,
			solid
			]
			{1/9};
			\addlegendentry{$1/9$}
			
			\addplot[
			domain=0:1,
			color=red!90!black,
			line width=3.5pt,
			dash pattern=on 8pt off 4pt
			]
			{x/8};
			\addlegendentry{$\mu/8$}
			
			\addplot[
			domain=0:1,
			color=green!60!black,
			line width=3.5pt,
			dash pattern=on 4pt off 3pt
			]
			{1-x};
			\addlegendentry{$1-\mu$}
			
			\addplot[
			domain=0:1,
			color=black,
			line width=4.0pt,
			]
			{max(1/9, x/8, 1-x)};
			\addlegendentry{$\max\{1/9, \mu/8, 1-\mu\}$}

			\addplot[
			only marks,
			mark=*,
			mark size=5pt,
			color=black,
			fill=white,
			draw=black,
			line width=1.5pt
			]
			coordinates {(0.8888889,0.1111111)};
			
			\draw[
			->,
			thick,
			color=black,
			line width=1.5pt
			] (axis cs:0.83,0.332) -- (axis cs:0.8888889,0.1411111);

			\node[
			anchor=south west,
			font=\large\bfseries,
			fill=white,
			draw=black,
			inner sep=5pt,
			rounded corners=4pt,
			line width=1.2pt,
			align=center
			] at (axis cs:0.73,0.35)
			{$(8/9,\,1/9)$};

			\draw[
			densely dashed,
			black!50,
			line width=1pt
			]
			(axis cs:0.8888889,0) -- (axis cs:0.8888889,0.1111111);
			
		\end{axis}
	\end{tikzpicture}
	\caption{Comparison of the three bounds for different values of $\mu$. The overall bound 
		$\max\{1/9,\mu/8,1-\mu\}$ (solid black) equals $1-\mu$ on $[0,8/9]$ and $\mu/8$ on $[8/9,1]$, 
		attaining its minimum value $1/9$ only at $\mu=8/9$ (marked with $\bullet$). 
		The shaded area under the max curve highlights the range of the bound.}
	\label{fig:bound-comparison}
\end{figure}
From Theorem \ref{Th-31}, we get the following corollary.
\begin{cor}\label{Cr-31}Let $f\in\mathcal{H}(\mathbb{B},\mathbb{C})$ with $f(0)=1$ and $F(z)=zf(z)\in\mathcal{K}_B(\mathbb{B})$. Then
\begin{align*}
\left|\frac{l_{z}(D^2F(0)(z^2))}{2!\|z\|^2}\frac{l_{z}(D^4F(0)(z^4))}{4!\|z\|^4}-\left(\frac{l_z(D^3F(0)(z^3))}{3!\|z\|^3}\right)^2\right|\le \frac{1}{8}.
\end{align*}
The bound is sharp.
\end{cor}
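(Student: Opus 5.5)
Corollary \ref{Cr-31} is the case $\mu=1$ of Theorem \ref{Th-31}. Putting $\mu=1$ there gives the bound $\max\{1/9,1/8,0\}=1/8$, which is exactly the stated inequality. The remaining task is sharpness: we need a mapping $F(z)=zf(z)\in\mathcal{K}_B(\mathbb{B})$ and a point $z$ where equality holds. This is modeled on the sharpness argument after Corollary \ref{Cr-21}.

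\textbf{Choice of extremal.} In the disk, the extremal functions for Theorem B are not the Koebe-type ones. The standard extremal for $|a_2a_4-a_3^2|\le 1/8$ in $\mathcal{K}(\mathbb{U})$ is the function with $zg''(z)/g'(z)+1=(1+z^2)/(1-z^2)$, i.e. $g'(z)=1/(1-z^2)$, so
\[
g(z)=\tfrac12\log\frac{1+z}{1-z}=z+\frac{z^3}{3}+\frac{z^5}{5}+\cdots .
\]
This gives $a_2=a_4=0$ and $a_3=1/3$, hence $|H_{2,2}|=1/9$, which is smaller than $1/8$. So the extremal must be different. The $1/8$ extremal in Thomas--Tuneski comes from the Carath\'eodory function with $h_1=t$ at an interior critical point, corresponding to the parameters $x=-1$, $\eta$ at the boundary in the Libera--Zlotkiewicz lemma.

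\textbf{Transfer to $\mathbb{B}$.} Given any extremal $g(\zeta)=\zeta\varphi(\zeta)$ in $\mathcal{K}(\mathbb{U})$, I would set
\[
F(z)=z\,\varphi(l_u(z)),\qquad \|u\|=1.
\]
The membership $F\in\mathcal{K}_B(\mathbb{B})$ should follow from the known Roper--Suffridge/Xu type lemma for mappings of the form $z\varphi(l_u(z))$ with $\zeta\varphi(\zeta)$ convex; this is the analogue of \cite[Lemma 2.2]{Xu-He-Xu} for type $B$ quasi-convexity. By \eqref{Th-2y}, $l_z(D^mF(0)(z^m))/(m!\,\|z\|^m)$ at $z=ru$ equals the $m$-th coefficient $a_m$ of $g$. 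The functional then becomes $|a_2a_4-a_3^2|$ for $g$, and this equals $1/8$.

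\textbf{Main obstacle.} The hard step is producing the explicit one-variable extremal for the value $1/8$ and checking that the corresponding $F$ actually lies in $\mathcal{K}_B(\mathbb{B})$. The membership requires verifying \eqref{C1} via the formula for $[DF(z)]^{-1}$ in \eqref{Th-21.2b}, which reduces \eqref{C1} to $\Re\{1+\zeta g''(\zeta)/g'(\zeta)\}$-type positivity along slices. For the first step, I would first extract the extremal values of $(h_1,h_2,h_3)$ from the proof of Theorem \ref{Th-31}, namely the $(t,x,\eta)$ where $\mu/8$ is attained at $\mu=1$. I would then realize them by a Carath\'eodory function of the form
\[
h(\zeta)=\frac{1-\zeta^2}{1-2c\zeta+\zeta^2}-\text{type},
\]
integrate $g'=\exp\int (h-1)/\zeta$, and define $F$ from the resulting $g$ as above.
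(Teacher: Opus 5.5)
Your derivation of the inequality (Theorem \ref{Th-31} at $\mu=1$, so $\max\{1/9,1/8,0\}=1/8$) is exactly the paper's. So is your sharpness mechanism: lift a convex $g(\zeta)=\zeta\varphi(\zeta)$ to $F(z)=z\varphi(l_u(z))$ and read off $a_2,a_3,a_4$ at $z=ru$ via \eqref{Th-2y}. The gap is that you never exhibit the extremal. The claim ``this equals $1/8$'' is made for an unspecified $g$, and you leave its construction open as the main obstacle. Since sharpness is part of the statement, the proposal does not yet prove the corollary.

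The missing step can be completed inside the family you already wrote down. Take $h(\zeta)=(1-\zeta^2)/(1-2c\zeta+\zeta^2)$ with $-1\le c\le 1$. The convex $g$ with $1+\zeta g''/g'=h$ has $g'(\zeta)=1/(1-2c\zeta+\zeta^2)$, hence $a_2=c$, $a_3=(4c^2-1)/3$, $a_4=2c^3-c$, and
\[
|a_2a_4-a_3^2|=\frac{1}{9}\,(1+2c^2)(1-c^2).
\]
This equals $1/9$ at $c=0$ (your odd example) and attains its maximum $1/8$ at $c^2=1/4$. Your family is exactly the slice $u_1=c$, $u_2=-1$ of Lemma \ref{Lm3}. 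On it the paper's estimate \eqref{e3.23} at $\mu=1$ holds with equality, and its maximum is at the critical point $u_1^2=1/4$.

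Taking $c=1/2$ gives $g'(\zeta)=1/(1-\zeta+\zeta^2)$ and $g(\zeta)=\zeta+\zeta^2/2-\zeta^4/4-\cdots$, so $a_2=1/2$, $a_3=0$, $a_4=-1/4$. This is precisely the paper's $f_1$; its closed logarithmic formula is correct with $\omega=(-1+i\sqrt{3})/2$. Positivity $\Re h>0$ follows from $\Re h(\zeta)=(1-|\zeta|^2)(1+|\zeta|^2-\Re\zeta)/|1-\zeta+\zeta^2|^2$.

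For membership in $\mathcal{K}_B(\mathbb{B})$ the paper simply cites Corollary 2 of \cite{Rop-Su}. Your slice reduction is a valid self-contained substitute. With $\zeta=l_u(z)$ one has $DF(z)z=g'(\zeta)z$ and $\phi(z)=(g'(\zeta)+\zeta g''(\zeta))z$, so $l_z([DF(z)]^{-1}\phi(z))/\|z\|=1+\zeta g''(\zeta)/g'(\zeta)$. Similarly $\|z\|/l_z([DF(z)]^{-1}F(z))=\zeta g'(\zeta)/g(\zeta)$, so $F$ is starlike and hence biholomorphic.
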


To establish the sharpness of the bound of Corollary \ref{Cr-31}, let us consider the function $F$ defined by
\begin{align*}
	F(z)=zf(z)=\frac{f_1(l_u(z))}{l_u(z)}z,\quad z\in\mathbb{B},\quad \|u\|=1,
\end{align*}
where 
\begin{align*}
f_1(l_u(z))=\frac{1}{{(\omega^2-\omega)}}\log\frac{1+\omega^2l_u(z)}{1+\omega l_u(z)},
\end{align*}
with $\omega=(1+i\sqrt{3})/2$. Straightforward calculation yields
\begin{align*}
1+\frac{zf_1''(z)}{f_1'(z)}=h(z)=\frac{1-z^2}{1-z+z^2}.
\end{align*}
Note that
\begin{align*}
	\Re\{h(z)\}=\frac{(1-|z|^2)(1+|z|^2-\Re\{z\})}{|1-z+z^2|^2}.
\end{align*}

Clearly $\Re\{h(z)\}>0$ for $z\in\mathbb{U}$ and so $f_1\in\mathcal{K}(\mathbb{U})$. According to Corollary 2 in \cite{Rop-Su}, $F\in\mathcal{K}_B(\mathbb{B})$. On the other hand, we have
\begin{align*}f_1(z)=z+\frac{z^2}{2}-\frac{z^4}{4}-\frac{z^5}{5}+\frac{z^7}{7}+\frac{z^8}{8}+\cdots.
\end{align*}
Therefore 
\begin{align*}
	f(z)=\frac{f_1(l_u(z))}{l_u(z)}=1+\frac{l_u(z)}{2}-\frac{l_u(z)^3}{4}-\frac{l_u(z)^4}{5}+\frac{l_u(z)^6}{7}+\frac{l_u(z)^7}{8}+\cdots
\end{align*}
for $z\in\mathbb{B}$ and $||u||=1$.
Clearly \eqref{Th-2y} yields
\begin{align*}
	\frac{l_z(D^2F(0)(z^2))}{2!}=\frac{l_u(z)||z||}{2},\;\;
	\frac{l_z(D^3F(0)(z^3))}{3!}=0\;\;\text{and}\;\;
	\frac{l_z(D^4F(0)(z^4))}{4!}=\frac{l_u(z)^3||z||}{4}.
\end{align*}

Taking $z=ru\;(0<r<1)$, we get
\begin{align*}
	\frac{l_z(D^2F(0)(z^2))}{2!||z||^2}=\frac{1}{2},\;\;
	\frac{l_z(D^3F(0)(z^3))}{3!||z||^3}=0\;\;\text{and}\;\;
	\frac{l_z(D^4F(0)(z^4))}{4!||z||^4}=\frac{1}{4}.
\end{align*}
\begin{figure}[H]
	\centering
	\begin{tikzpicture}
		\begin{axis}[
			title={Real part of $h(z)=\dfrac{1-z^2}{1-z+z^2}$ for $|z|<1$},
			xlabel={$x=\Re(z)$},
			ylabel={$y=\Im(z)$},
			colormap/viridis,
			view={0}{90},
			width=0.78\textwidth,
			height=0.68\textwidth,
			axis equal image,
			xmin=-1.15, xmax=1.15,
			ymin=-1.15, ymax=1.15,
			grid=major,
			grid style={line width=0.3pt, draw=gray!40},
			minor tick num=4,
			xtick={-1, -0.5, 0, 0.5, 1},
			ytick={-1, -0.5, 0, 0.5, 1},
			ticklabel style={font=\small},
			title style={font=\large\bfseries, align=center},
			]

			\addplot3[
			surf,
			shader=interp,
			variable=r,
			variable y=theta,
			domain=0:0.995,
			samples=45,
			domain y=0:360,
			samples y=90,
			] (
			{r*cos(theta)},
			{r*sin(theta)},
			{ min( (1-r^2)*(1+r^2-r*cos(theta))
				/ ( (1-r*cos(theta)+r^2*cos(theta)^2-r^2*sin(theta)^2)^2
				+ (2*r^2*cos(theta)*sin(theta)-r*sin(theta))^2 ), 4) }
			);

			\addplot3[domain=0:360, samples=120, samples y=1, thick, black, line width=1.5pt]
			({cos(x)},{sin(x)},{5});

			\addplot3[domain=0:360, samples=80, samples y=1, dashed, thick, gray!60]
			({0.5*cos(x)},{0.5*sin(x)},{5});

			\draw[->, thick, black!70, >=stealth] 
			(axis cs:-1.12,0,5) -- (axis cs:1.12,0,5);
			\draw[->, thick, black!70, >=stealth] 
			(axis cs:0,-1.12,5) -- (axis cs:0,1.12,5);

			\node[black!70, font=\small] at (axis cs:1.18,0,5) {$x$};
			\node[black!70, font=\small] at (axis cs:0,1.18,5) {$y$};

			\addplot3[only marks, mark=*, mark size=5pt, thick, red]
			coordinates {(0.5,0.8660254,5) (0.5,-0.8660254,5)};

			\node[red, font=\small\bfseries, anchor=south west] at (axis cs:0.52,0.92,5) {$e^{i\pi/3}$};
			\node[red, font=\small\bfseries, anchor=north west] at (axis cs:0.52,-0.92,5) {$e^{-i\pi/3}$};

			\node[align=left, font=\scriptsize, draw=red!70, fill=white, fill opacity=0.85,
			text opacity=1, rounded corners, inner sep=3pt, anchor=south]
			at (axis cs:-0.55,0.62,5)
			{$\Re\{h(z)\}\to 0$};
			\node[align=left, font=\scriptsize, draw=red!70, fill=white, fill opacity=0.85,
			text opacity=1, rounded corners, inner sep=3pt, anchor=north]
			at (axis cs:-0.55,0.62,5)
			{as $z\to e^{\pm i\pi/3}$};

			\node[black!50, font=\small] at (axis cs:0,0,5) {$0$};

			\draw[thick, black, line width=1.5pt] (axis cs:-0.95,-1.08,5) -- (axis cs:-0.75,-1.08,5);
			\node[black, font=\small] at (axis cs:-0.55,-1.08,5) {Unit circle};
			
			\draw[dashed, thick, gray!60] (axis cs:-0.95,-1.18,5) -- (axis cs:-0.75,-1.18,5);
			\node[black, font=\small] at (axis cs:-0.55,-1.18,5) {$|z|=1/2$};
			
			\draw[red, mark=*, mark size=2pt] (axis cs:-0.95,-1.28,5) -- (axis cs:-0.75,-1.28,5);
			\node[red, font=\small] at (axis cs:-0.55,-1.28,5) {Singularities of $h$};
			
		\end{axis}
	\end{tikzpicture}
	\caption[Positivity of $\Re\{h(z)\}$ and sharpness of Corollary~\ref{Cr-31}]{%
		\textbf{Real part of $h(z)=\dfrac{1-z^2}{1-z+z^2}$} on $\mathbb{U}$, arising from
		the extremal function $f_1$ via $1+\dfrac{zf_1''(z)}{f_1'(z)}=h(z)$. Positivity of
		$\Re\{h(z)\}$ confirms $f_1\in\mathcal{K}(\mathbb{U})$ (Corollary~2, \cite{Rop-Su}), hence
		$F=zf(z)\in\mathcal{K}_B(\mathbb{B})$; the decay $\Re\{h(z)\}\to 0$ at $z=e^{\pm i\pi/3}$
		(red markers) shows the bound $1/8$ in Corollary~\ref{Cr-31} is sharp.%
	}
	\label{fig:real-part-p}
\end{figure}

Therefore
\begin{align*}
	\left|\frac{l_{z}(D^2F(0)(z^2))}{2!\|z\|^2}\frac{l_{z}(D^4F(0)(z^4))}{4!\|z\|^4}-\left(\frac{l_z(D^3F(0)(z^3))}{3!\|z\|^3}\right)^2\right|=\frac{1}{8},
\end{align*}

which establishes the sharpness of the bound.

\begin{theo}\label{Th-32}Let $f\in\mathcal{H}(\mathbb{U}^n,\mathbb{C})$ with $f(0)=1$ and $F(z)=zf(z)\in\mathcal{K}_B(\mathbb{U}^n)$ and let $\mu\in [0,1]$. Then
\begin{align*}
\displaystyle
&\left\|\frac{1}{3!}D^3F(0)\left(z^2,\frac{D^3F(0)(z^3)}{3!}\right)-\mu\frac{1}{4!}D^4F(0)\left(z^3,\frac{D^2F(0)(z^2)}{2!}\right)\right\|_{\infty}\\\leq&
\max\left\{\frac{1}{9}, \frac{\mu}{8}, 1-\mu\right\}||z||_{\infty}^6, \quad z\in\mathbb{U}^n.
\end{align*}
\end{theo}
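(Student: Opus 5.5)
The plan is to follow the scheme of the proof of Theorem \ref{Th-22}, with the starlike condition replaced by the quasi-convexity condition \eqref{C1}, and with the one-variable estimate of Theorem \ref{Th-31} playing the role that Theorem \ref{Th-21} played there.

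\emph{Reduction to one complex variable.} Fix $z\in\mathbb{U}^n\setminus\{0\}$, choose $k$ with $\|z\|_\infty=|z_k|$, and put $z_0=z/\|z\|_\infty$. Use \eqref{eq3} with $t_k=1$ in \eqref{C1}. This gives
\[
\Re\left\{\frac{\overline{z_k}\,v_k(z)}{|z_k|}\right\}>0,\qquad v(z)=[DF(z)]^{-1}\bigl(D^2F(z)(z,z)+DF(z)z\bigr).
\]
Then define $p_k(\zeta)=v_k(\zeta z_0)/(\zeta z_k/\|z\|_\infty)$ for $\zeta\neq 0$ and $p_k(0)=1$. This is a Carath\'eodory function on $\mathbb{U}$.

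\emph{Computing $p_k$ and matching coefficients.} Using $F=zf$ and the inverse formula \eqref{Th-21.2b}, compute $v(z)$. A direct differentiation shows that $v(z)$ is a scalar multiple of $z$, namely $v(z)=\phi(z)\,z$ with $\phi$ built from $f$, $Df(z)z$ and $D^2f(z)(z,z)$. Hence $p_k(\zeta)=\phi(\zeta z_0)$ does not depend on $k$ beyond the choice of the normalizing coordinate. This is exactly the same scalar function that arises in the Banach-space proof of Theorem \ref{Th-31}. Write $p_k(\zeta)=1+p_{k1}\zeta+p_{k2}\zeta^2+p_{k3}\zeta^3+\cdots$. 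Comparing coefficients in the resulting identity then expresses $Df(0)(z_0)$, $\frac{1}{2!}D^2f(0)(z_0^2)$ and $\frac{1}{3!}D^3f(0)(z_0^3)$ in terms of $p_{k1},p_{k2},p_{k3}$, just as in the convex one-variable case. Next, use \eqref{Th-22.11} and the computation \eqref{Th-22.12}, which only uses $F=zf$ and so carries over verbatim. It shows that the $k$-th component of the vector functional, multiplied by $\|z\|^6_\infty/z_k$ and evaluated at $z_0$, equals
\[
\left(\frac{D^2f(0)(z_0^2)}{2!}\right)^2-\mu\, Df(0)(z_0)\,\frac{D^3f(0)(z_0^3)}{3!}.
\]
This is precisely the scalar quantity bounded in Theorem \ref{Th-31}, written in terms of $p_{k1},p_{k2},p_{k3}$. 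Invoking the estimate of that proof, which applies Lemma \ref{lm1} to $p_k$ and maximizes over $t=p_{k1}\in[0,2]$, $|x|\le1$, $|\eta|\le1$, gives the bound $\max\{1/9,\mu/8,1-\mu\}$ for $z_0\in\partial_0\mathbb{U}^n$, where $|z_k|=\|z\|_\infty$ and the factor $\|z\|_\infty/z_k$ is unimodular.

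\emph{Passing to all of $\mathbb{U}^n$.} Each component $\frac{1}{3!}D^3F_k(0)\bigl(z^2,\frac{D^3F(0)(z^3)}{3!}\bigr)-\mu\frac{1}{4!}D^4F_k(0)\bigl(z^3,\frac{D^2F(0)(z^2)}{2!}\bigr)$ is a homogeneous polynomial of degree $6$, holomorphic on $\overline{\mathbb{U}^n}$. For a point of $\partial_0\mathbb{U}^n$ every coordinate has modulus one, so every index $k$ is admissible and the bound holds for all components there. The maximum modulus principle on the polydisk then extends the bound to all of $\overline{\mathbb{U}^n}$. Homogeneity, applied to $z/\|z\|_\infty$, yields the factor $\|z\|_\infty^6$, and taking the maximum over $k$ gives the $\|\cdot\|_\infty$ estimate.

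\emph{Main obstacle.} The delicate step is the explicit computation of $v(z)$ for $F=zf$, that is, verifying that it is a scalar multiple of $z$ so that each coordinate produces the same one-variable function $p_k$. This must be done carefully via \eqref{Th-21.2b}, including the second-derivative term $D^2F(z)(z,z)=z\,D^2f(z)(z,z)+2(Df(z)z)\,z$. Once this is established, the remaining work is routine: it is the coefficient optimization already performed in the proof of Theorem \ref{Th-31}.
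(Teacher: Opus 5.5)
Your one-variable reduction is the same as the paper's, and it is sound. With $|z_k|=\|z\|_\infty$ one has $[DF(z)]^{-1}\bigl(D^2F(z)(z,z)+DF(z)z\bigr)=\Phi(z)\,z$, where $\Phi(z)=\frac{f(z)+3Df(z)z+D^2f(z)(z,z)}{f(z)+Df(z)z}$. So your $p_k$ equals $1+\zeta\psi''(\zeta)/\psi'(\zeta)$ for $\psi(\zeta)=\zeta f(\zeta z_0)$; unlike the paper's $g_k$, it is correctly normalized by $\zeta z_k/\|z\|_\infty$. The estimate of Theorem~\ref{Th-31} therefore applies. (That proof uses Lemmas~\ref{Lm3} and \ref{l2.5}, not Lemma~\ref{lm1}, but you only need its conclusion.)

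The step that fails is the last one: the components are homogeneous of degree $5$, not $6$. Put $q_j(z)=\frac{1}{j!}D^jf(0)(z^j)$. By \eqref{Th-2x}, $\frac{1}{m!}D^mF(0)(z^m)=q_{m-1}(z)\,z$, so linearity in the last slot gives
\[
\frac{1}{3!}D^3F_k(0)\Bigl(z^2,\frac{D^3F(0)(z^3)}{3!}\Bigr)-\mu\frac{1}{4!}D^4F_k(0)\Bigl(z^3,\frac{D^2F(0)(z^2)}{2!}\Bigr)=\bigl(q_2(z)^2-\mu\,q_1(z)q_3(z)\bigr)z_k .
\]
This has degree $4+1=5$; equivalently, $2+3$ and $3+2$ from the multilinear slots. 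Correspondingly, the factor in your identity at $z_0$ must be $\|z\|_\infty/z_k$, as in \eqref{Th-22.12}, not $\|z\|_\infty^6/z_k$. Homogeneity therefore yields $\|z\|_\infty^5$. No argument can yield $\|z\|_\infty^6$, because the inequality as stated is false.

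Here is a counterexample. $F(z)=z/(1-z_1)$ lies in $\mathcal{K}_B(\mathbb{U}^n)$, since $\Phi(z)=\frac{1+z_1}{1-z_1}$, and it has $q_j(z)=z_1^j$. At $z=(r,0,\ldots,0)$ the left-hand side equals $(1-\mu)r^5$. This exceeds $\max\{\frac19,\frac{\mu}{8},1-\mu\}\,r^6=(1-\mu)r^6$ for $0\le\mu\le\frac89$ and $0<r<1$. For $\frac89<\mu\le 1$, use the extremal given after Corollary~\ref{Cr-32}, which has $q_1=\frac{z_1}{2}$, $q_2=0$, $q_3=-\frac{z_1^3}{4}$. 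It gives $\frac{\mu}{8}r^5>\frac{\mu}{8}r^6$.

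The paper's proof makes exactly the same unjustified final passage, from the bound on $\overline{\mathbb{U}^n}$ to the factor $\|z\|_\infty^6$. Its own sharpness examples, computed correctly, also give $r^5$.

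With the exponent corrected to $5$, your argument is complete. In fact it simplifies further. The scalar $q_2(z_0)^2-\mu q_1(z_0)q_3(z_0)$ does not depend on $k$, and Theorem~\ref{Th-31} applies to $\mathbb{U}^n$ as the unit ball of $(\mathbb{C}^n,\|\cdot\|_\infty)$. So that scalar is bounded on all of $\partial\mathbb{U}^n$ directly, and the maximum modulus step is unnecessary.
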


\begin{cor}\label{Cr-32}Let $f\in\mathcal{H}(\mathbb{U}^n,\mathbb{C})$ with $f(0)=1$ and $F(z)=zf(z)\in\mathcal{K}_B(\mathbb{U}^n)$. Then
\begin{align*}
\displaystyle
\left\|\frac{1}{3!}D^3F(0)\left(z^2,\frac{D^3F(0)(z^3)}{3!}\right)-\frac{1}{4!}D^4F(0)\left(z^3,\frac{D^2F(0)(z^2)}{2!}\right)\right\|_{\infty}\leq \frac{1}{8}||z||_{\infty}^6
\end{align*}
for all $z\in\mathbb{U}^n$.
\end{cor}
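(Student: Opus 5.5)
Corollary~\ref{Cr-32} is the case $\mu=1$ of Theorem~\ref{Th-32}. Since $\max\{1/9,1/8,0\}=1/8$, the bound $\frac18\|z\|_\infty^6$ follows at once. So the real task is to prove Theorem~\ref{Th-32} (assuming Theorem~\ref{Th-31}, stated earlier). I would follow the pattern of the proof of Theorem~\ref{Th-22} from Theorem~\ref{Th-21}.

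\emph{Step 1: reduce to one coordinate.} Fix $z\in\mathbb{U}^n\setminus\{0\}$ and $k$ with $|z_k|=\|z\|_\infty$. Choose $l_z$ in \eqref{eq3} with $t_k=1$ and all other weights zero. Then condition \eqref{C1} for $F=zf$ becomes positivity of the real part of the $k$-th coordinate of
\[
\frac{\overline{z_k}}{\|z\|_\infty}\,[DF(z)]^{-1}\bigl(D^2F(z)(z,z)+DF(z)z\bigr).
\]
Using \eqref{Th-21.2b}, this quantity depends only on the scalar function $\zeta\mapsto f(\zeta z_0)$ with $z_0=z/\|z\|_\infty$. Hence
\[
p(\zeta)=\frac{\bigl(\zeta\varphi(\zeta)\bigr)''\zeta}{\bigl(\zeta\varphi(\zeta)\bigr)'},\qquad \varphi(\zeta)=f(\zeta z_0),
\]
with $p(0)=1$, is a Carath\'eodory function; in fact $1+p$ has positive real part in the convex-type normalization. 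This is exactly the reduction used in the proof of Theorem~\ref{Th-31}. Expanding $\varphi$ as in \eqref{Th-21.5} and comparing coefficients, $Df(0)(z_0)$, $\frac12D^2f(0)(z_0^2)$ and $\frac16D^3f(0)(z_0^3)$ become the one-variable convex coefficients $a_2,a_3,a_4$ written through $p_1,p_2,p_3$.

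\emph{Step 2: scalar bound.} As in \eqref{Th-22.12}, the identity $D^mF_k(0)(z_0^m)/m!=\bigl(D^{m-1}f(0)(z_0^{m-1})/(m-1)!\bigr)z_k/\|z\|_\infty$ turns the $k$-th component, multiplied by $\|z\|_\infty/z_k$, into
\[
\Bigl(\tfrac{1}{2}D^2f(0)(z_0^2)\Bigr)^2-\mu\, Df(0)(z_0)\,\tfrac16 D^3f(0)(z_0^3).
\]
This is precisely the quantity estimated in the proof of Theorem~\ref{Th-31}, so it is bounded by $\max\{1/9,\mu/8,1-\mu\}$.

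\emph{Step 3: pass to all of $\mathbb{U}^n$.} For $z_0\in\partial_0\mathbb{U}^n$ every coordinate has modulus one, so every $k$ qualifies. This gives the bound for each component
\[
\frac{1}{3!}D^3F_k(0)\Bigl(z_0^2,\tfrac{D^3F(0)(z_0^3)}{3!}\Bigr)-\mu\frac{1}{4!}D^4F_k(0)\Bigl(z_0^3,\tfrac{D^2F(0)(z_0^2)}{2!}\Bigr).
\]
Each component is a homogeneous polynomial of degree $6$, holomorphic on $\overline{\mathbb{U}^n}$. The maximum modulus principle on the distinguished boundary extends the estimate to all $z_0\in\overline{\mathbb{U}^n}$. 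Homogeneity then yields the factor $\|z\|_\infty^6$, and taking the maximum over $k$ gives the sup norm.

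\emph{Main obstacle.} The delicate point is Step 1: $f$ must enter $[DF]^{-1}(D^2F(z)(z,z)+DF(z)z)$ only through $\zeta\mapsto f(\zeta z_0)$, with the same coefficient relations as in Theorem~\ref{Th-31}. I would verify this by computing $DF(z)w=f(z)w+(Df(z)w)z$ and $D^2F(z)(z,z)=2(Df(z)z)z+(D^2f(z)(z,z))z$. Both are multiples of $z$, so the inverse formula \eqref{Th-21.2b} reduces everything to a scalar multiple of $z$. The $k$-th coordinate then factors as $z_k$ times a scalar function of $\zeta$.
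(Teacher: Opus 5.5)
Your route is the paper's own: set $\mu=1$ in Theorem~\ref{Th-32}, use $t_k=1$ to reduce to the convex scalar function $\psi(\zeta)=\zeta f(\zeta z_0)$, apply the one-variable estimate, then use the distinguished boundary, the maximum modulus principle and homogeneity. Steps~1 and~2 are fine, apart from a slip: $\zeta\psi''/\psi'$ vanishes at $0$, and it is $1+\zeta\psi''/\psi'$ that lies in $\mathcal{P}$. The argument breaks in Step~3, and the break cannot be repaired, because the inequality as stated is false.

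\textbf{The degree count in Step~3 is wrong.} Put $a_2(z)=Df(0)(z)$, $a_3(z)=\frac12D^2f(0)(z^2)$ and $a_4(z)=\frac16D^3f(0)(z^3)$. By \eqref{Th-2x},
\[
\frac{1}{2!}D^2F(0)(z^2)=a_2(z)z,\qquad \frac{1}{3!}D^3F(0)(z^3)=a_3(z)z,\qquad \frac{1}{4!}D^4F(0)(z^4)=a_4(z)z .
\]
Multilinearity then gives, for each $k$,
\[
\frac{1}{3!}D^3F_k(0)\Bigl(z^2,\frac{D^3F(0)(z^3)}{3!}\Bigr)-\mu\frac{1}{4!}D^4F_k(0)\Bigl(z^3,\frac{D^2F(0)(z^2)}{2!}\Bigr)=z_k\bigl(a_3(z)^2-\mu a_2(z)a_4(z)\bigr).
\]
This is homogeneous of degree $5$, not $6$: a trilinear form is fed two copies of $z$ and one cubic vector, and a $4$-linear form is fed three copies of $z$ and one quadratic vector.

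Homogeneity therefore yields only $\frac18\|z\|_\infty^5$, which is weaker than $\frac18\|z\|_\infty^6$ on $\mathbb{U}^n$. No argument can do better. If a degree-$5$ homogeneous map $P$ satisfied $\|P(z)\|_\infty\le C\|z\|_\infty^6$ on $\mathbb{U}^n$, then putting $z=rz_0$ with $\|z_0\|_\infty=1$ gives $\|P(z_0)\|_\infty\le Cr$ for every $r\in(0,1)$. Hence $P\equiv 0$.

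\textbf{A concrete counterexample.} Take the paper's own extremal map $F(z)=z\,f_1(z_1)/z_1$, which does lie in $\mathcal{K}_B(\mathbb{U}^n)$. It has $a_2(z)=z_1/2$, $a_3(z)=0$ and $a_4(z)=-z_1^3/4$. So the expression equals $\frac{z_1^4}{8}\,z$, and at $z=(r,0,\ldots,0)$ its norm is $r^5/8>r^6/8$. For $n=1$ the claim would read $|a_2a_4-a_3^2|\,|z|^5\le\frac18|z|^6$, which already fails for $f_1$.

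The paper's proof of Theorem~\ref{Th-32} and its sharpness computation contain the same slip. What your argument, and the paper's, actually establishes is the sharp bound $\frac18\|z\|_\infty^5$.
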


To establish the sharpness of the bound of Corollary \ref{Cr-32}, let us consider the function $F$ defined by $	F(z)=zf(z)$, $z\in\mathbb{U}^n$,
where 
\begin{align*}
f(z)=\frac{1}{(\omega^2-\omega)}\frac{1}{z_1}\log\frac{1+\omega^2z_1}{1+\omega z_1}=1+\frac{z_1}{2}-\frac{z_1^3}{4}-\frac{z_1^4}{5}+\frac{z_1^6}{7}+\frac{z_1^7}{8}+\cdots
\end{align*}
with $\omega=(1+i\sqrt{3})/2$. 
Clearly $F\in\mathcal{K}_B(\mathbb{U}^n)$. 
Using \eqref{Th-2x}, we get
\begin{align*}
	\frac{l_z(D^2F(0)(z^2))}{2!}=\frac{z_1||z||}{2},\;\;
	\frac{l_z(D^3F(0)(z^3))}{3!}=0\;\;\text{and}\;\;
	\frac{l_z(D^4F(0)(z^4))}{4!}=\frac{z_1^3||z||}{4}.
\end{align*}

For $0<r<1$, choosing $z=(r,0,\ldots,0)$; one can easily verify that
\begin{align*}
	\displaystyle
	\left\|\frac{1}{3!}D^3F(0)\left(z^2,\frac{D^3F(0)(z^3)}{3!}\right)-\frac{1}{4!}D^4F(0)\left(z^3,\frac{D^2F(0)(z^2)}{2!}\right)\right\|_{\infty}= \frac{1}{8}||z||_{\infty}^6,
\end{align*}
which establishes the sharpness of the bound.

\subsection{{Auxiliary lemmas}}
The lemma below is instrumental in establishing our main results.

\begin{lem}\label{Lm3}\emph{\cite[Lemma 2.4]{CKL1}} If $h\in\mathcal{P}$ is of the form $h(z)=1+\sum_{n=1}^{\infty}h_nz^n$, then
	\bea
	\label{e2.2}h_1 =2u_1,\eea
	\bea\label{e2.3} h_2=2u_1^2 + 2(1 - u_1^2)u_2,\eea
	and
	\bea
	\label{e2.4} h_3 = 2u_1^3+4(1-u_1^2)u_1u_2 - 2(1 - u_1^2)u_1u_2^2 + 2(1 - u_1^2)(1 - |u_2|^2)u_3
	\eea
	for some $u_1, u_2, u_3 \in{\ol {\mathbb{U}}}:= \{z \in \mathbb{C}: |z| \leq 1 \}$.\\
	For $u_1 \in \mathbb{T}:= \{z \in \mathbb{C}: |z| = 1 \}$, there is a unique function $h\in \mathcal{P}$ with $h_1$  as in \emph{(\ref{e2.2})}, namely
	\[h(z)=\frac{1+u_1 z}{1 - u_1 z}, \quad z \in \mathbb{U}.\]\\
	For $u_1 \in \mathbb{U}$ and $u_2 \in \mathbb{T}$, there is a unique function $h\in \mathcal{P}$ with $h_1$ and $h_2$ as in \emph{(\ref{e2.2})} and \emph{(\ref{e2.3})}, namely
	\[h(z) = \frac{1+(\ol u_1 u_2+u_1)z + u_2 z^2}{1+(\ol u_1 u_2 - u_1)z - u_2 z^2}, \quad z \in \mathbb{U}.\]\\
	For $u_1, u_2 \in \mathbb{U}$ and $u_3 \in \mathbb{T}$, there is a unique function $h\in\mathcal{P}$ with $h_1$, $h_2$ and $h_3$  as in \emph{(\ref{e2.2})--(\ref{e2.4})}, namely
	\[h(z)=\frac{1 + (\ol u_2 u_3 + \ol u_1 u_2 + u_1)z+(\ol u_1 u_3+u_1\ol u_2 u_3+u_2)z^2+u_3z^3}{1+(\ol u_2 u_3+\ol u_1 u_2-u_1)z+(\ol u_1 u_3-u_1\ol u_2 u_3-u_2)z^2-u_3z^3},\;\;z\in\mathbb{U}.\]
\end{lem}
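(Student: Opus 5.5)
We would prove Lemma \ref{Lm3} by the Schur algorithm applied to the Schwarz function associated with $h$. Given $h\in\mathcal{P}$, set $\omega=(h-1)/(h+1)$. Then $\omega$ is holomorphic on $\mathbb{U}$, $\omega(0)=0$ and $|\omega|<1$, so $\omega(z)=z\psi_0(z)$ with $\psi_0:\mathbb{U}\to\ol{\mathbb{U}}$ holomorphic. Conversely, every such $\psi_0$ gives an element $h=(1+z\psi_0)/(1-z\psi_0)$ of $\mathcal{P}$. Expanding $h=1+2\omega+2\omega^2+2\omega^3+\cdots$ with $\omega(z)=c_1z+c_2z^2+c_3z^3+\cdots$ gives
\begin{align*}
h_1=2c_1,\qquad h_2=2(c_2+c_1^2),\qquad h_3=2(c_3+2c_1c_2+c_1^3).
\end{align*}
So it suffices to parametrize $c_1,c_2,c_3$.

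\emph{Step 1 (Schur parameters).} Put $u_1=\psi_0(0)\in\ol{\mathbb{U}}$. If $|u_1|<1$, define $\psi_1$ by $\psi_0(z)=(u_1+z\psi_1(z))/(1+\ol u_1 z\psi_1(z))$; by Schwarz--Pick, $\psi_1$ again maps $\mathbb{U}$ into $\ol{\mathbb{U}}$. Set $u_2=\psi_1(0)$, repeat once more to get $\psi_2$, and set $u_3=\psi_2(0)$. If at some stage $|u_j|=1$, the maximum principle makes $\psi_{j-1}\equiv u_j$ constant, and the later parameters may be taken arbitrarily. Expanding the Möbius maps to order $z^2$ gives
\begin{align*}
c_1=u_1,\qquad c_2=(1-|u_1|^2)u_2,\qquad c_3=(1-|u_1|^2)(1-|u_2|^2)u_3-(1-|u_1|^2)\ol u_1u_2^2 .
\end{align*}
Substituting these into the formulas for $h_1,h_2,h_3$ yields \eqref{e2.2}--\eqref{e2.4}. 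The factors $1-u_1^2$ and the term $u_1u_2^2$ in the statement appear with $|u_1|^2$ and $\ol u_1$; this agrees with the statement as printed when $u_1$ is real, which is the normalization $h_1\ge0$ used later via rotation invariance of $\mathcal{P}$.

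\emph{Step 2 (uniqueness and extremal functions).} If $u_1\in\mathbb{T}$, then $\psi_0\equiv u_1$, so $h=(1+u_1z)/(1-u_1z)$ is the only member of $\mathcal{P}$ with that $h_1$. If $u_1\in\mathbb{U}$ and $u_2\in\mathbb{T}$, then $\psi_1\equiv u_2$. Hence
\begin{align*}
\omega(z)=z\,\frac{u_1+u_2z}{1+\ol u_1u_2z},
\end{align*}
and simplifying $(1+\omega)/(1-\omega)$ gives the stated quotient. The case $u_1,u_2\in\mathbb{U}$, $u_3\in\mathbb{T}$ is the same: $\psi_2\equiv u_3$, and composing the two Möbius maps gives a degree-three Blaschke product for $\omega$. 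Clearing denominators produces the displayed rational function. Uniqueness holds because the first Schur parameters are determined by $h_1,h_2,h_3$: one checks successively that $c_1$, then $c_2$, then $c_3$ recover $u_1$, $u_2$, $u_3$ as long as the earlier parameters lie in $\mathbb{U}$.

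The main obstacle is bookkeeping rather than ideas. One must expand the nested Möbius transformation correctly to get $c_3$, in particular the sign and conjugation of the $\ol u_1u_2^2$ term. One must also simplify the degree-three composition into the final displayed form of $h$, which is routine algebra in which errors are easy to make.
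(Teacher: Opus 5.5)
The paper gives no proof of this lemma; it quotes it from \cite{CKL1}. The standard proof obtains \eqref{e2.2}--\eqref{e2.4} from the Libera--Z{\l}otkiewicz formulas by setting $h_1=2u_1$, $x=u_2$, $\eta=u_3$. These are the paper's Lemma~\ref{lm1}, whose first identity should read $2h_2=h_1^2+x(4-h_1^2)$. Those formulas come from positivity of the Carath\'eodory--Toeplitz determinants. The uniqueness clauses come from the degenerate case, where a vanishing determinant forces the measure representing $h$ to be finitely supported.

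Your Schur-algorithm proof is correct and takes a genuinely different route. It uses only the Schwarz lemma and the maximum principle, and it handles existence, the extremal functions and uniqueness by one mechanism. It also shows that $x$ and $\eta$ in Lemma~\ref{lm1} are exactly the second and third Schur parameters of $\omega=(h-1)/(h+1)$. It extends mechanically to $h_4,h_5,\dots$. The Toeplitz route parametrizes $h_2,h_3$ directly without passing through $\omega$. However, it is built around the normalization $h_1\ge0$, and it needs the structure of singular Toeplitz forms for uniqueness. Your expansions of $c_1,c_2,c_3$, the simplification of $(1+\omega)/(1-\omega)$ in both extremal cases, and the successive recovery of $u_1,u_2,u_3$ are all correct.

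Your remark about $1-|u_1|^2$ and $\ol u_1$ should be stated as a correction, because it is not cosmetic. The hypothesis $h_1\ge0$ (equivalently $u_1\in[0,1]$) is carried by Lemma~\ref{lm1} but dropped from this statement, and without it the lemma as printed is false. Take $\omega(z)=z(a+z)/(1+\ol a z)$ with $a=i/4$. Then $h=(1+z^2)/(1-\frac{i}{2}z-z^2)\in\mathcal P$, with $h_1=i/2$, $h_2=7/4$, $h_3=11i/8$. Now \eqref{e2.2} forces $u_1=i/4$, \eqref{e2.3} forces $u_2=15/17$, and \eqref{e2.4} forces $u_3=15i/8\notin\ol{\mathbb U}$. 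Likewise, for non-real $u_1$ the displayed second extremal function has $h_2=2u_1^2+2(1-|u_1|^2)u_2$, not the printed value.

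So say explicitly that you prove the rotation-covariant version for every $h\in\mathcal P$. It reduces to \eqref{e2.2}--\eqref{e2.4} whenever $h_1$ is real, in particular under $h_1\ge0$. That normalization costs nothing in Theorem~\ref{Th-31}: every term of \eqref{e3.17} has weight four, so its modulus is invariant under $h(\zeta)\mapsto h(e^{i\theta}\zeta)$.
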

\begin{lem}\label{l2.5}\emph{\cite{CKS1}} Let $A$, $B$, $C$  be real numbers and let
\begin{align*}
\Psi(A, B, C):= \max\limits_{z\in \ol{{\mathbb{U}}}}\left\lbrace |A+Bz+Cz^2|+1-|z|^2\right\rbrace.
\end{align*}
	
\begin{enumerate} 
\item[\emph{(i)}] If $AC\geq 0$, then
\begin{align*}
\Psi(A, B, C) =
\begin{cases}
|A|+|B|+|C|, & \text{if}\;\;\; |B|\geq 2(1-|C|), \\
1+|A|+\frac{B^2}{4(1-|C|)}, &\text{if}\;\;\; |B|<2(1-|C|).
\end{cases}
\end{align*}
\item[\emph{(ii)}] If $AC<0$, then 
\begin{align*}
\Psi(A,B,C)=
\begin{cases}
1-|A|+\frac{B^2}{4(1-|C|)}, &\text{if}\;\;\; -4AC(C^{-2}-1) \leq B^2\; \text{and}\; |B|<2(1-|C|), \\
1+|A|+\frac{B^2}{4(1+|C|)}, &\text{if}\;\;\; B^2<\min\left\{4(1+|C|)^2, -4AC(C^{-2}-1) \right\}, \\
R(A,B,C), &\text{otherwise},
\end{cases}
\end{align*}
where
\begin{align*}
R(A,B,C):=
\begin{cases}
|A|+|B|-|C|, & \text{if}\;\;\; |C|(|B|+4|A|) \leq |AB|, \\
-|A|+|B|+|C|, & \text{if}\;\;\; |AB|\leq |C|(|B|-4|A|), \\
(|C|+|A| )\sqrt{1-\frac{B^2}{4AC}}, &\text{otherwise}.
\end{cases}
\end{align*}
\end{enumerate} 
\end{lem}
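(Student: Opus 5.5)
The plan is to reduce the maximization over $\ol{\mathbb{U}}$ to a two-stage real optimization. First fix the modulus $r=|z|\in[0,1]$ and maximize over the argument; then maximize the resulting function of $r$ after adding $1-r^2$. The structural split $AC\ge 0$ versus $AC<0$ arises naturally in the first stage.

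\emph{Stage 1: the angular maximum.} Write $z=re^{i\theta}$ and put $c=\cos\theta\in[-1,1]$. Since $A,B,C$ are real, a direct expansion gives
\begin{align*}
|A+Bz+Cz^2|^2 = 4ACr^2c^2+2Br(A+Cr^2)c+\bigl(A^2+B^2r^2+C^2r^4-2ACr^2\bigr)=:q_r(c).
\end{align*}
\begin{itemize}
\item If $AC\ge 0$, then $q_r$ is convex in $c$, so its maximum on $[-1,1]$ is attained at $c=\pm1$. This gives $\max_{|z|=r}|A+Bz+Cz^2| = |A|+|B|r+|C|r^2$, choosing the sign of $c$ to align the signs of the three terms; this is possible because $A$ and $C$ have the same sign.
\item If $AC<0$, then $q_r$ is concave, with vertex at $c_0(r)=-B(A+Cr^2)/(4ACr)$.
  \begin{itemize}
  \item When $|c_0(r)|\le 1$, the maximum is the vertex value. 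A short computation gives it as $(|A|+|C|r^2)^2\bigl(1-\frac{B^2}{4AC}\bigr)$, using $-2AC=2|A||C|$ to recombine terms.
  \item When $|c_0(r)|>1$, the maximum is again at an endpoint, giving $\bigl||A|-|C|r^2\bigr|+|B|r$ or the analogous signed combination.
  \end{itemize}
\end{itemize}

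\emph{Stage 2, case $AC\ge 0$.} Maximize $\varphi(r)=|A|+|B|r+(|C|-1)r^2+1$ on $[0,1]$.
\begin{itemize}
\item If $|C|\ge 1$, then $\varphi$ is nondecreasing, and $r=1$ gives $|A|+|B|+|C|$.
\item If $|C|<1$, then $\varphi$ is concave with vertex $r^*=|B|/(2(1-|C|))$.
  \begin{itemize}
  \item If $r^*\ge1$, i.e.\ $|B|\ge 2(1-|C|)$, the maximum is again $|A|+|B|+|C|$.
  \item Otherwise the vertex value is $1+|A|+\frac{B^2}{4(1-|C|)}$.
  \end{itemize}
\end{itemize}
This proves (i).

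\emph{Stage 2, case $AC<0$.} Here I would partition $[0,1]$ according to whether $|c_0(r)|\le1$. I would then maximize separately on each piece:
\begin{itemize}
\item the endpoint expressions $1-r^2+|B|r\pm(|A|-|C|r^2)$, which are quadratics in $r$;
\item the vertex expression $1-r^2+(|A|+|C|r^2)\sqrt{1-B^2/(4AC)}$.
\end{itemize}
The candidate values that appear are the following.
\begin{itemize}
\item Interior critical points of the quadratic pieces give $1-|A|+\frac{B^2}{4(1-|C|)}$ and $1+|A|+\frac{B^2}{4(1+|C|)}$. Their admissibility is governed by $|B|<2(1-|C|)$, $B^2<4(1+|C|)^2$, and by whether $|c_0|>1$ holds at the critical point. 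The last condition reduces to comparing $B^2$ with $-4AC(C^{-2}-1)$.
\item The value at $r=1$ gives $R(A,B,C)$ with its three sub-cases. At $r=1$, the condition $|c_0(1)|\le1$ is exactly $|B(A+C)|\le 4|AC|$. Rewriting this comparison in terms of $|A|,|B|,|C|$ (recall $A+C$ has modulus $\bigl||A|-|C|\bigr|$) yields the thresholds $|C|(|B|+4|A|)\le|AB|$ and $|AB|\le|C|(|B|-4|A|)$ that select $|A|+|B|-|C|$, $-|A|+|B|+|C|$, or $(|A|+|C|)\sqrt{1-B^2/(4AC)}$.
\end{itemize}

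The main obstacle is this last bookkeeping step for $AC<0$. One must show that in each parameter region the claimed candidate dominates all the others. One must also check that the region where $|c_0(r)|\le1$ is an interval in $r$ that fits consistently with the interior maxima of the neighbouring pieces.

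I would handle it by first noting that $|c_0(r)|$ is a convex function of $r>0$. So $\{r:|c_0(r)|\le1\}$ is an interval, and at its endpoints the vertex and endpoint formulas agree, which makes the combined function of $r$ continuous. Then I would compare the piecewise maxima region by region, using the stated inequalities as the exact switching conditions.
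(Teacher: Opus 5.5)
The paper does not prove this lemma; it quotes it from \cite{CKS1}. Your proposal therefore has to stand on its own. Stage~1 is correct, including the vertex value $(|A|+|C|r^2)^2\bigl(1-\frac{B^2}{4AC}\bigr)$, and so is all of part~(i).

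\textbf{The gap is Stage~2 of part~(ii).} This is where the whole case distinction of the lemma is decided. Let $g(r):=\max_{|z|=r}|A+Bz+Cz^2|+1-r^2$. You list the candidates $1-|A|+\frac{B^2}{4(1-|C|)}$, $1+|A|+\frac{B^2}{4(1+|C|)}$ and $g(1)$, but you never prove which one is the maximum. Saying you will use ``the stated inequalities as the exact switching conditions'' assumes the conclusion. Concretely, your plan never controls the vertex piece. Put $K=\sqrt{1-B^2/(4AC)}$ and $I=\{r:|c_0(r)|\le1\}=[r_-,r_+]$. On $I$ one has $g(r)=1+|A|K+(|C|K-1)r^2$, which is monotone in $r$, so its maximum over $I$ sits at an endpoint of $I$. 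You give no reason why that value, or $g(1)$, cannot exceed the claimed interior value in the first two regions.

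\textbf{The missing idea} is that $|C|K\ge1$ if and only if $B^2\ge X:=-4AC(C^{-2}-1)$. So the direction of monotonicity of the vertex piece is governed by the same threshold that, as you correctly noted, decides whether $|c_0|>1$ at $r_2=\frac{|B|}{2(1+|C|)}$ and at $r_1=\frac{|B|}{2(1-|C|)}$. With this, the argument closes quickly.
\begin{itemize}
\item If $B^2\ge X$, then $r_2\ge r_-$ and $r_1\ge r_+$ (read $r_1=+\infty$ if $|C|\ge1$). Hence $g$ is nondecreasing on $[0,\min\{r_1,1\}]$ and decreasing on $[r_1,1]$. This gives $1-|A|+\frac{B^2}{4(1-|C|)}$ when $|B|<2(1-|C|)$, and $g(1)$ otherwise.
\item If $B^2<X$ (which forces $|C|<1$), then $r_2\le r_-$ and $r_1<r_+$. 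Hence $g$ increases up to $r_2$ and decreases afterwards, since the left quadratic piece, the vertex piece and the right quadratic piece are all decreasing past $r_2$. This gives $1+|A|+\frac{B^2}{4(1+|C|)}$ when $r_2<1$, i.e.\ $B^2<4(1+|C|)^2$, and $g(1)$ otherwise.
\end{itemize}
Your $r=1$ analysis then correctly identifies $g(1)=\max_{|z|=1}|A+Bz+Cz^2|$ with $R(A,B,C)$.

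A smaller point: $|c_0(r)|=\frac{|B|}{4|AC|}\bigl|\,|A|/r-|C|r\,\bigr|$ is not convex in $r$; it is concave for $r>\sqrt{|A|/|C|}$. What is true, and all you need, is that it decreases and then increases, so $I$ is still an interval.
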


\begin{proof}[\bf Proof of Theorem \ref{Th-31}]
By the given condition, we have $F\in\mathcal{K}_B(\mathbb{B})$ and so using \ref{C1}, we must have
\begin{align*}
\displaystyle
\Re\left\lbrace \frac{l_z\left([DF(z)]^{-1}\phi(z)\right)}{||z||}\right\rbrace>0, \quad z\in\mathbb{B}\backslash\{0\},\quad l_z\in T(z),
\end{align*}
where $\phi(z)=D^2 F(z)(z^2)+DF(z) z$.		
Let $z\in \mathbb{B}\backslash\{0\}$ be fixed and let $z_0=z/\|z\|$. We define the function $g:\mathbb{U}\to \mathbb{C}$ as follows
\begin{align}\label{Th-31.0}	
\displaystyle
g(\zeta)=\begin{cases}
\frac{l_{z}\left([DF(\zeta z_0)]^{-1}\phi(\zeta z_0)\right)}{\zeta},& \zeta\neq 0,\\[8pt]
1& \zeta=0.
\end{cases}
\end{align}
	
Clearly $g\in\mathcal{H}(\mathbb{U})$ and $g(0)=1$.
Therefore for $\zeta\in\mathbb{U}\backslash\{0\}$, we have
\begin{align*}
\displaystyle
g(\zeta)=\frac{l_{\zeta z_0}\left([DF(\zeta z_0)]^{-1}\phi(\zeta z_0)\right)}{||\zeta z_0||}.
\end{align*}	
Consequently $\Re\{g(\zeta)\}>0$ for all $\zeta\in\mathbb{U}$ and so we may assume that
\begin{align}\label{Th-31.1}
\displaystyle
g(\zeta)=1+h_1\zeta+h_2\zeta^2+\ldots
\end{align}
	
Since $F(z)$ is normalized, it has the power series expansion
\begin{align}\label{Th-31.2}
\displaystyle
F(z)=z+\frac{1}{2!}D^2 F(0)(z^2)+\frac{1}{3!}D^3 F(0)(z^3)+\frac{1}{4!}D^4 F(0)(z^4)+\ldots.
\end{align}
	
Clearly
\begin{align}\label{Th-31.3}
\displaystyle
D F(z)=I+D^2 F(0)(z)+\frac{1}{2}D^3 F(0)(z^2)+\frac{1}{6}D^4F(0)(z^3)+\ldots
\end{align}
and 
\begin{align}\label{Th-31.4}
\displaystyle
D^2 F(z)=D^2 F(0)+D^3 F(0)(z)+\frac{1}{2}D^4F(0)(z^2)+\ldots
\end{align}
	
Using \eqref{Th-31.3} and \eqref{Th-31.4}, we get
\begin{align}\label{Th-31.5}
\phi(z)=z+2D^2F(0)(z^2)+\frac{3}{2}D^3F(0)(z^3)+\frac{2}{3}D^4F(0)(z^4)+\ldots	
\end{align}
		
Now substituting \eqref{Th-2x} into \eqref{Th-31.5}, we get
\begin{align}\label{Th-31.6}
\phi(z)=z+4Df(0)(z)z+\frac{9}{2}D^2f(0)(z^2)z+\frac{8}{3}D^3f(0)(z^3)z+\ldots	
\end{align}
	
On the other hand, \eqref{Th-21.2b} gives
\begin{align}\label{Th-31.7}
\displaystyle
[D F(z)]^{-1}\phi(z)=\frac{\phi(z)}{f(z)+D f(z)z}.
\end{align}
	
Therefore using \eqref{Th-31.6} to \eqref{Th-31.7}, we obtain
\begin{align}\label{Th-31.8}
\displaystyle
&[D F(\zeta z_0)]^{-1}\phi(\zeta z_0)\\=&
\frac{\zeta z_0+4Df(0)(\zeta z_0)\zeta z_0+\frac{9}{2}D^2f(0)((\zeta z_0)^2)\zeta z_0+\frac{8}{3}D^3f(0)((\zeta z_0)^3)\zeta z_0+\ldots}{f(\zeta z_0)+D f(\zeta z_0)\zeta z_0}.\nonumber	
\end{align}
	
Also, from \eqref{Th-21.5} and \eqref{Th-21.6}, we obtain
\begin{align}\label{Th-31.9}
f(\zeta z_0)+D f(\zeta z_0)\zeta z_0=&1+2Df(0)(z_0)\zeta+\frac{3}{2}D^2f(0)(z_0^2)\zeta^2\\&+\frac{2}{3}D^3f(0)(z_0^3)\zeta^3+\ldots\nonumber
\end{align}

In view of \eqref{Th-31.0} and using \eqref{Th-31.8} and \eqref{Th-31.9}, we find that
\begin{align}\label{Th-31.10}
\displaystyle
&\left(1+h_1\zeta+h_2\zeta^2+h_3\zeta^3+\ldots\right)\times\\&\left(1+2Df(0)(z_0)\zeta+\frac{3}{2}D^2f(0)(z_0^2)\zeta^2+\frac{2}{3}D^3f(0)(z_0^3)\zeta^3+\ldots\right)\nonumber\\=&
1+4Df(0)(z_0)\zeta+\frac{9}{2} D^2f(0)(z_0^2)\zeta^2+\frac{8}{3}D^3f(0)(z_0^3)\zeta^3+\ldots.\nonumber
\end{align}
	
Clearly from \eqref{Th-31.10}, we find that
\begin{align}\label{Th-31.11}
\displaystyle
\begin{cases}
h_1&=2Df(0)(z_0),\\[8pt]
h_2&=3D^2f(0)(z_0^2)-4\left(Df(0)(z_0)\right)^2,\\[8pt]
h_3&=2D^3f(0)(z_0^3)-9Df(0)(z_0)D^2f(0)(z_0^2)+8(Df(0)(z_0))^3.
\end{cases}
\end{align}
	
Using \eqref{Th-2y}, we deduce from \eqref{Th-31.11} that
\begin{align}\label{Th-31.12}
\displaystyle
\frac{l_z(D^2F(0)(z^2))}{2!\|z\|^2}=\frac{1}{2}h_1,
\end{align}
\begin{align}\label{Th-31.13}
\displaystyle
\frac{l_{z}(D^3F(0)(z^3))}{3!\|z\|^3}=\frac{1}{6}\left(h_2+h_1^2\right)
\end{align}
and
\begin{align}\label{Th-31.14}
\displaystyle
\frac{l_{z}(D^4F(0)(z^4))}{4!\|z\|^4}=\frac{1}{12}\left(h_3+\frac{3}{2}h_1h_2+\frac{1}{2}h_1^3\right).
\end{align}
	
Now using \eqref{Th-31.12}-\eqref{Th-31.14}, we find that 
\begin{align}\label{e3.17}
\nonumber&\left|\mu\frac{l_{z}(D^2F(0)(z^2))}{2!\|z\|^2}\frac{l_{z}(D^4F(0)(z^4))}{4!\|z\|^4}-\left(\frac{l_z(D^3F(0)(z^3))}{3!\|z\|^3}\right)^2\right|\\=&\frac{1}{144}\left|6\mu h_1h_3+(9\mu-8)h_1^2h_2+(3\mu-4)h_1^4-4h_2^2 \right|.
\end{align}
Since $\mu\in[0, 1]$, we consider the following cases:\\
{\bf Case-1.} Choose $\mu=0$, so from \eqref{e3.17} we have the trivial bound 
\begin{align*}
	\left|\mu\frac{l_{z}(D^2F(0)(z^2))}{2!\|z\|^2}\frac{l_{z}(D^4F(0)(z^4))}{4!\|z\|^4}-\left(\frac{l_z(D^3F(0)(z^3))}{3!\|z\|^3}\right)^2\right|=1.
\end{align*}
{\bf Case-2.} Choose $0<\mu\le1$. Now, substituting \eqref{e2.2}-\eqref{e2.4} into \eqref{e3.17} we obtain
\begin{align}\label{e3.18}
	\nonumber&\left|\mu\frac{l_{z}(D^2F(0)(z^2))}{2!\|z\|^2}\frac{l_{z}(D^4F(0)(z^4))}{4!\|z\|^4}-\left(\frac{l_z(D^3F(0)(z^3))}{3!\|z\|^3}\right)^2\right|\\=&\frac{\mu}{6}\bigg|-\frac{6(1-\mu)}{\mu}u_1^4+\left(\frac{5\mu-4}{\mu}\right)(1-u_1^2)u_1^2u_2-\left(\frac{2+(3\mu-2)u_1^2}{3\mu}\right)(1-u_1^2)u_2^2\nonumber\\&+(1-u_1^2)(1-|u_2|^2)u_1u_3 \bigg|
\end{align}

Since $u_1\in[0,1]$, by Lemma~\ref{Lm3}, we first consider the boundary value of $u_1$.
\begin{align*} &\left|\mu\frac{l_{z}(D^2F(0)(z^2))}{2!\|z\|^2}\frac{l_{z}(D^4F(0)(z^4))}{4!\|z\|^4}-\left(\frac{l_z(D^3F(0)(z^3))}{3!\|z\|^3}\right)^2\right|\\&=
\begin{cases}
\frac{1}{9}\mid u_2\mid^2\le \frac{1}{9}, & u_1=0, \\[2mm]
1-\mu, & u_1=1.
\end{cases} \end{align*} 

It therefore remains to consider the case $0<u_1<1$.
Also, by applying the triangle inequality to (\ref{e3.18}), for $u_1\in (0, 1)$ and $\mid u_3\mid\le1$, we obtain the following inequality
\begin{align}\label{e3.19}
&\left|\mu\frac{l_{z}(D^2F(0)(z^2))}{2!\|z\|^2}\frac{l_{z}(D^4F(0)(z^4))}{4!\|z\|^4}-\left(\frac{l_z(D^3F(0)(z^3))}{3!\|z\|^3}\right)^2\right|\nonumber\\\le&\frac{\mu}{6}u_1(1-u_1^2)\bigg(\bigg|-\frac{6(1-\mu)}{\mu(1-u_1^2)}u_1^3+\left(\frac{5\mu-4}{\mu}\right)u_1u_2-\left(\frac{2+(3\mu-2)u_1^2}{3\mu u_1}\right)u_2^2\bigg|\nonumber\\&+(1-|u_2|^2)\bigg)\nonumber\\&\le\frac{\mu}{6}u_1(1-u_1^2) \Psi(A, B, C),
\end{align}
where $\Psi(A, B, C)=\mid A+Bu_2+Cu_2^2\mid+(1-\mid u_2\mid^2)$ with
 \begin{align}\label{e3.20} A=-\frac{6(1-\mu)}{\mu(1-u_1^2)}u_1^3,\;\;  B=\left(\frac{5\mu-4}{\mu}\right)u_1\;\; \text{and}\;\; C=-\left(\frac{2+(3\mu-2)u_1^2}{3\mu u_1}\right).
 \end{align}
 

Since $0<\mu\le 1$, it is easy to verify that $AC>0$. Also, from \eqref{e3.20} we obtain 
\begin{align*} |B|-2(1-|C|)=
\begin{cases}
\frac{(8-9\mu)u_1^2-6\mu u_1+4}{3\mu u_1}, & 0<\mu<\frac{4}{5}, \\[2mm]
\frac{(21\mu-16)u_1^2-6\mu u_1+4}{3\mu u_1}, & \frac{4}{5}\le\mu\le1.
\end{cases} \end{align*}

In view of $0<\mu\le 1$ and $0<u_1<1$, it is easy to verify that $|B|\ge 2(1-|C|)$. Therefore, by Lemma \ref{l2.5}, we get
\begin{align}\label{e3.21}
\Psi(A, B, C)&=|A|+|B|+|C|=
\begin{cases}
\frac{(8-6\mu)u_1^4+(8-12\mu)u_1^2+2}{3\mu u_1(1-u_1^2)}, & 0<\mu<\frac{4}{5}, \\[2mm]
\frac{(32-36\mu)u_1^4+(18\mu-16)u_1^2+2}{3\mu u_1(1-u_1^2)}, & \frac{4}{5}\le\mu\le1.
\end{cases} 
\end{align}

Now, we consider the following cases:
\medskip

{\bf A.} Let us consider $0<\mu<\frac{4}{5}$. Then from \eqref{e3.19} and \eqref{e3.21}, we get 
\begin{align}\label{e3.22}
\nonumber&\left|\mu\frac{l_{z}(D^2F(0)(z^2))}{2!\|z\|^2}\frac{l_{z}(D^4F(0)(z^4))}{4!\|z\|^4}-\left(\frac{l_z(D^3F(0)(z^3))}{3!\|z\|^3}\right)^2\right|\\\le&\frac{1}{9}\left((4-3\mu)u_1^4+(4-6\mu)u_1^2+1\right)=\frac{1}{9}P(u_1),
\end{align}
where $P(u_1)=(4-3\mu)u_1^4+(4-6\mu)u_1^2+1$.
\begin{figure}[!ht]
	\centering
	\begin{tikzpicture}
		\begin{axis}[
			title={Case A: Behavior of $P(x)=(4-3\mu)x^2+(4-6\mu)x+1$},
			xlabel={$x$},
			ylabel={$P(x)$},
			xmin=0,
			xmax=1.05,
			ymin=0,
			ymax=7,
			xtick={0,0.25,0.5,0.75,1},
			ytick={0,1,2,3,4,5,6,7},
			grid=both,
			grid style={line width=.1pt, draw=gray!20},
			major grid style={line width=.2pt, draw=gray!50},
				legend style={
					at={(0.03,0.72)},
					anchor=north west,
					font=\small
				},
			axis line style={thick},
			width=0.8\textwidth,
			height=0.5\textwidth,
			samples=100,
			]
			
			\addplot[
			domain=0:1,
			blue,
			line width=2pt
			]
			{(4-3*0.3)*x^2+(4-6*0.3)*x+1};
			\addlegendentry{$\mu=0.3$}
			
			\addplot[
			domain=0:1,
			red,
			line width=2pt,
			dashed
			]
			{(4-3*0.7)*x^2+(4-6*0.7)*x+1};
			\addlegendentry{$\mu=0.7$}
			
			\addplot[
			only marks,
			mark=*,
			blue,
			mark size=2.5pt
			]
			coordinates {(1,6.3)};
			
			\addplot[
			only marks,
			mark=*,
			red,
			mark size=2.5pt
			]
			coordinates {(1,2.7)};
			
			\addplot[
			domain=0:1,
			blue,
			line width=1.2pt,
			dashed
			]
			{6.3};
			
			\addplot[
			domain=0:1,
			red,
			line width=1.2pt,
			dotted
			]
			{2.7};
			
			\node[
			anchor=east,
			blue,
			font=\small
			] at (axis cs:0.98,6.55)
			{$P(1)=9(1-\mu)=6.3$};
			
			\node[
			anchor=east,
			red,
			font=\small
			] at (axis cs:0.98,2.95)
			{$P(1)=9(1-\mu)=2.7$};
			
		\end{axis}
	\end{tikzpicture}
	
	\caption{Illustration of Case A ($0<\mu<4/5$). For the representative values
		$\mu=0.3$ and $\mu=0.7$, the function $P(x)$ is convex on $[0,1]$ and attains
		its maximum at $x=1$, where $P(1)=9(1-\mu)$.}
	\label{fig:case-a}
\end{figure}
 Let us choose $u_1^2=x$ and we have $P(x)=(4-3\mu)x^2+(4-6\mu)x+1$. Since, $0<u_1<1$, then it is clear that $0<x<1$. Here it is easy to verify that ${P(x)}\le P(1)=9(1-\mu)$. Therefore, from \eqref{e3.22} we obtain 
\begin{align*}
\left|\mu\frac{l_{z}(D^2F(0)(z^2))}{2!\|z\|^2}\frac{l_{z}(D^4F(0)(z^4))}{4!\|z\|^4}-\left(\frac{l_z(D^3F(0)(z^3))}{3!\|z\|^3}\right)^2\right|\le 1-\mu.
\end{align*}

\medskip
{\bf B.} Let us consider $\frac{4}{5}\le\mu\le 1$. Here from \eqref{e3.19} and \eqref{e3.21}, we obtain 
\begin{align}\label{e3.23}
\nonumber&\left|\mu\frac{l_{z}(D^2F(0)(z^2))}{2!\|z\|^2}\frac{l_{z}(D^4F(0)(z^4))}{4!\|z\|^4}-\left(\frac{l_z(D^3F(0)(z^3))}{3!\|z\|^3}\right)^2\right|\\\le&\frac{1}{9}\left((16-18\mu)u_1^4+(9\mu-8)u_1^2+1\right)=\frac{1}{9}Q(u_1),
\end{align}
where 
\begin{align*}
Q(u_1)=(16-18\mu)u_1^4+(9\mu-8)u_1^2+1.
\end{align*}
\begin{figure}[!ht]
	\centering
	\begin{tikzpicture}
		\begin{axis}[
			title={Case B: Behavior of $Q(x)=(16-18\mu)x^2+(9\mu-8)x+1$},
			xlabel={$x$}, ylabel={$Q(x)$},
			xmin=0, xmax=1.05, ymin=0, ymax=1.5,
			xtick={0,0.25,0.5,0.75,1},
			grid=both,
			grid style={line width=.1pt, draw=gray!20},
			major grid style={line width=.2pt, draw=gray!50},
			legend style={at={(0.98,0.75)}, anchor=north east, font=\small},
			axis line style={thick},
			width=0.8\textwidth, height=0.5\textwidth, samples=100,
			]

			\addplot[domain=0:1, blue, line width=2pt]
			{(16-18*0.85)*x^2+(9*0.85-8)*x+1};
			\addlegendentry{$\mu=0.85$}

			\addplot[domain=0:1, red, dashed, line width=2pt]
			{(16-18*0.95)*x^2+(9*0.95-8)*x+1};
			\addlegendentry{$\mu=0.95$}

			\addplot[only marks, mark=*, blue, mark size=3pt]
			coordinates {(1,1.35)};
			\addplot[domain=0:1, black, dashed, line width=1pt] {1.35};
			\node[anchor=west, blue, font=\small] at (axis cs:0.45,1.40)
			{$Q(1)=9(1-\mu)=1.35$};

			\addplot[only marks, mark=*, red, mark size=3pt]
			coordinates {(0.25,1.06875)};
			\draw[red, dashed, line width=1pt]
			(axis cs:0.25,0) -- (axis cs:0.25,1.06875);
			\addplot[domain=0:1, black, dotted, line width=1pt] {1.06875};
			\node[anchor=west, red, font=\small] at (axis cs:0.30,1.15)
			{$Q(1/4)=\dfrac{9\mu}{8}=1.06875$};
			
		\end{axis}
	\end{tikzpicture}
	\caption{Illustration of Case B ($4/5\le\mu\le1$). For $\mu=0.85\in[4/5,8/9]$, $Q$ is convex and its maximum on $[0,1]$ occurs at the boundary $x=1$, giving $Q(1)=9(1-\mu)$. For $\mu=0.95\in[8/9,1]$, $Q$ is concave and its maximum occurs at the interior critical point $x=1/4$, giving $Q(1/4)=9\mu/8$.}
	\label{fig:case-b-corrected}
\end{figure}
 Suppose $u_1^2=x$, then $Q(x)=(16-18\mu)x^2+(9\mu-8)x+1$. Since $0<u_1<1$ implies $0<x<1$. Thus the critical point of $Q(x)$ is $x_c=\frac{1}{4}$. Also we have 
\begin{align*}
Q^{\prime\prime}(x)=4(8-9\mu).
\end{align*}

So it is clear that  $Q^{\prime\prime}<0$ for $\frac{8}{9}<\mu\le 1$. Therefore the maximum value of $Q(x)$ is $\frac{9\mu}{8}$.
Also for $\frac{4}{5}\le \mu\le\frac{8}{9}$, maximum value of $Q(x)$ exists at the boundary. Hence
\begin{align*}
Q(x)\le Q(1)=9(1-\mu).
\end{align*}

Thus from \eqref{e3.23}, we obtain 
\begin{align*}
&\left|\mu\frac{l_{z}(D^2F(0)(z^2))}{2!\|z\|^2}\frac{l_{z}(D^4F(0)(z^4))}{4!\|z\|^4}-\left(\frac{l_z(D^3F(0)(z^3))}{3!\|z\|^3}\right)^2\right|\\\le&
\begin{cases}
1-\mu, & \frac{4}{5}\le\mu\le\frac{8}{9}, \\[2mm]	\frac{\mu}{8}, & \frac{8}{9}\le\mu\le1.
	\end{cases}
\end{align*}

Therefore, considering all the scenarios, we obtain the desired upper bound:
\begin{align*}
	\left|\mu\frac{l_{z}(D^2F(0)(z^2))}{2!\|z\|^2}\frac{l_{z}(D^4F(0)(z^4))}{4!\|z\|^4}-\left(\frac{l_z(D^3F(0)(z^3))}{3!\|z\|^3}\right)^2\right|\le\max\left\{\frac{1}{9}, \frac{\mu}{8}, 1-\mu\right\}.
\end{align*}
\end{proof}

\begin{proof}[\bf Proof of Theorem \ref{Th-32}]
By the given condition, $F\in\mathcal{K}_B(\mathbb{U}^n)$. Therefore
\begin{align*}
	\displaystyle
	\Re\left\lbrace \frac{l_z\left([DF(z)]^{-1}\phi(z)\right)}{||z||_{\infty}}\right\rbrace>0, \quad z\in\mathbb{U}^n\backslash\{0\},\quad l_z\in T(z),
\end{align*}
where $\phi(z)=D^2 F(z)(z^2)+DF(z) z$.

Let $w(z)=(w_1(z),w_2(z),\ldots,w_n(z))'=[D F(z)]^{-1}F(z)$ be a column vector in $\mathbb{C}^n$.  Clearly
\begin{align}\label{Th-32.3}
\displaystyle
\Re\left\lbrace \frac{w_k(z)}{z_k}\right\rbrace>0,\quad z\in\mathbb{U}^n\backslash\{0\},
\end{align}
where $w_k(z)$ is the $k$-th coordinate of $w(z)$ such that $||z||_{\infty}=|z_k|$.
	
Let $z\in\mathbb{U}^n\backslash\{0\}$ and let $z_0=z/\|z\|$. We define the function $g_k:\mathbb{U}\to \mathbb{C}$ as follows
\begin{align}\label{Th-32.4}	
\displaystyle
g_k(\zeta)=\begin{cases}
\frac{w_k(\zeta z_0)}{\zeta z_k},& \zeta\neq 0,\\[8pt]
1& \zeta=0.
\end{cases}
\end{align}
	
Now it follows from \eqref{Th-32.3} and \eqref{Th-32.4} that $\Re\{g_k(\zeta)\}>0$ for all $\zeta\in\mathbb{U}$. 	
On the other hand, \eqref{Th-31.8} gives
\begin{align}\label{Th-32.6}
\displaystyle
&g_k(\zeta)\\=&\frac{\zeta z_0+4Df(0)(\zeta z_0)\zeta z_0+\frac{9}{2}D^2f(0)((\zeta z_0)^2)\zeta z_0+\frac{8}{3}D^3f(0)((\zeta z_0)^3)\zeta z_0+\ldots}{f(\zeta z_0)+D f(\zeta z_0)\zeta z_0}.\nonumber
\end{align}
		
Let
\begin{align}\label{Th-32.7a}
g_k(\zeta)=1+g_{k1}\zeta+g_{k2}\zeta^2+g_{k3}\zeta^3+\ldots
\end{align}
	
Now using \eqref{Th-31.9} and \eqref{Th-32.7a} to \eqref{Th-32.6} and then comparing coefficients, we obtain
\begin{align}\label{Th-32.8}
\begin{cases}
Df(0)(z_0)&=\frac{1}{2}g_{k1},\\[8pt]
D^2f(0)(z_0^2)&=\frac{1}{3}\left(h_{k2}+h_{k1}^2\right),\\[8pt]
D^3f(0)(z_0^3)&=\frac{1}{2}\left(g_{k3}+\frac{3}{2}g_{k1}g_{k2}+\frac{1}{2}g_{k1}^2\right).
\end{cases}
\end{align}
	
Let $F(z)=(F_1(z),F_2(z),\ldots,F_n(z))$. 
In view of \eqref{Th-22.11}, we obtain
\begin{align}\label{Th-32.12}
\displaystyle
&\left|\frac{1}{3!}D^3F_k(0)\left(z_0^2,\frac{D^3F(0)(z_0^3)}{3!}\right)\frac{||z||_{\infty}}{z_k}-\mu\frac{1}{4!}D^4F_k(0)\left(z_0^3,\frac{D^2F(0)(z_0^2)}{2!}\right)\frac{||z||_{\infty}}{z_k}\right|\nonumber\\=&
\left|\left(\frac{D^2f(0)(z_0^2)}{2!}\right)^2-\mu Df(0)(z_0)\frac{D^3f(0)(z_0^3)}{3!}\right|.
\end{align}
	
Using \eqref{Th-32.8} to \eqref{Th-32.12}, we obtain
\begin{align*}
\displaystyle
&\left|\frac{1}{3!}D^3F_k(0)\left(z_0^2,\frac{D^3F(0)(z_0^3)}{3!}\right)\frac{||z||_{\infty}}{z_k}-\mu\frac{1}{4!}D^4F_k(0)\left(z_0^3,\frac{D^2F(0)(z_0^2)}{2!}\right)\frac{||z||_{\infty}}{z_k}\right|\\=&
\frac{1}{144}\left|6\mu g_{k1}g_{k3}+(9\mu-8)g_{k1}^2g_{k2}+(3\mu-4)g_{k1}^4-4g_{k2}^2 \right|.	
\nonumber
\end{align*}
	
Using similar arguments as in the proof of Theorem \ref{Th-31}, we have
\begin{align*}
\displaystyle
&\left|\frac{1}{3!}D^3F_k(0)\left(z_0^2,\frac{D^3F(0)(z_0^3)}{3!}\right)\frac{||z||_{\infty}}{z_k}-\mu\frac{1}{4!}D^4F_k(0)\left(z_0^3,\frac{D^2F(0)(z_0^2)}{2!}\right)\frac{||z||_{\infty}}{z_k}\right|\\&\le\max\left\{\frac{1}{9}, \frac{\mu}{8}, 1-\mu\right\}.	
\nonumber
\end{align*}

Now adopting the same procedure as done in the proof of Theorem \ref{Th-22}, we can prove that 	
\begin{align*}
\displaystyle
&\left\|\frac{1}{3!}D^3F(0)\left(z^2,\frac{D^3F(0)(z^3)}{3!}\right)-\mu\frac{1}{4!}D^4F(0)\left(z^3,\frac{D^2F(0)(z^2)}{2!}\right)\right\|_{\infty}\\\leq&
\max\left\{\frac{1}{9}, \frac{\mu}{8}, 1-\mu\right\}||z||_{\infty}^6, \quad z\in\mathbb{U}^n.
\end{align*}
\end{proof}
\noindent\textbf{Conflict of interest:} The authors declare that there is no conflict of interest regarding the publication of this paper.\vspace{1.2mm}

\noindent {\bf Funding:} Not Applicable.\vspace{1.2mm}

\noindent\textbf{Data availability statement:}  Data sharing not applicable to this article as no datasets were generated or analysed during the current study.\vspace{1.2mm}

\noindent {\bf Authors' contributions:} All the authors have equal contributions in preparation of the manuscript.

\end{document}